\numberwithin{equation}{section}
\newtheorem{theorem}{Theorem}[section]
\newtheorem{corollary}[theorem]{Corollary}
\theoremstyle{definition}
\newtheorem{example}{Example}[section]
\newcommand{\bibb}[1]{\left\{#1\right\}}
\newcommand{\smbb}[1]{\left(#1\right)}
\newcommand{\la}{\lambda}
\newcommand{\si}{\sigma}
\newcommand{\ze}{\zeta}
\newcommand{\uud}{\,\mathrm{d}}
\newcommand{\Li}{\mathrm{Li}}
\newcommand{\Lii}{\,\mathrm{Li}}
\newcommand{\ol}{\overline}
\newdimen\bibspace
\renewenvironment{thebibliography}[1]{%
 \section*{\refname 
       \@mkboth{\MakeUppercase\refname}{\MakeUppercase\refname}}%
     \list{\@biblabel{\@arabic\c@enumiv}}%
          {\settowidth\labelwidth{\@biblabel{#1}}%
           \leftmargin\labelwidth
           \advance\leftmargin\labelsep
           \itemsep\bibspace
           \parsep\z@skip     %
           \@openbib@code
           \usecounter{enumiv}%
           \let\p@enumiv\@empty
           \renewcommand\theenumiv{\@arabic\c@enumiv}}%
     \sloppy\clubpenalty4000\widowpenalty4000%
     \sfcode`\.\@m}
    {\def\@noitemerr
      {\@latex@warning{Empty `thebibliography' environment}}%
     \endlist}
\begin{document}

\title{\bf\boldmath{Explicit formulas of Euler sums via multiple zeta values}}
\author
{
Ce Xu\thanks{\emph{E-mail address\,:}
15959259051@163.com (Ce Xu).}
\quad
Weiping Wang\thanks{Corresponding author. \emph{E-mail addresses\,:}
wpingwang@yahoo.com, wpingwang@zstu.edu.cn (Weiping Wang).}
\\
\small $\ast$ School of Mathematical Sciences, Xiamen University,
    Xiamen 361005, PR China\\
\small $\dag$ School of Science, Zhejiang Sci-Tech University,
    Hangzhou 310018, PR China
}
\date{}
\maketitle

\vspace{-0.5cm}
\begin{center}
\parbox{6.3in}{\small{\bf Abstract}\vspace{3pt}

\hspace{3.5ex}Flajolet and Salvy pointed out that every Euler sum is a $\mathbb{Q}$-linear combination of multiple zeta values. However, in the literature, there is no formula completely revealing this relation. In this paper, using permutations and compositions, we establish two explicit formulas for the Euler sums, and show that all the Euler sums are indeed expressible in terms of MZVs. Moreover, we apply this method to the alternating Euler sums, and show that all the alternating Euler sums are reducible to alternating MZVs. Some famous results, such as the Euler theorem, the Borwein--Borwein--Girgensohn theorems, and the Flajolet--Salvy theorems can be obtained directly from our theory. Some other special cases, such as the explicit expressions of $S_{r^m,q}$, $S_{r^m,\bar{q}}$, $S_{\bar{r}^m,q}$ and $S_{\bar{r}^m,\bar{q}}$, are also presented here. The corresponding Maple programs are developed to help us compute all the sums of weight $w\leq 11$ for non-alternating case and of weight $w\leq 6$ for alternating case.
}

\vspace{6pt}
\parbox{6.3in}{\small{\emph{AMS classification\,:}}\,\,
40A25; 11B83; 11M32}

\vspace{0.5pt}
\parbox{6.3in}{\small{\emph{Keywords\,:}}\,\,
Compositions; Euler sums; Harmonic numbers; Multiple zeta values; Permutations}
\end{center}




\section{Introduction}\label{Sec.intro}

The \emph{generalized harmonic numbers} are defined by
\[
H_0^{(r)}=0\quad\text{and}\quad H_n^{(r)}=\sum_{k=1}^n\frac{1}{k^r}
    \quad\text{for }n,r=1,2,\ldots.
\]
When $r=1$, they reduce to the classical harmonic numbers, denoted as $H_n=H_n^{(1)}$.

Let $\pi=(\pi_1,\pi_2,\ldots,\pi_k)$ be a partition of the positive integer $p$ into $k$ summands, that is, $p=\pi_1+\pi_2+\cdots+\pi_k$ and $\pi_1\leq\pi_2\leq\cdots\leq\pi_k$. Let $q$ be a positive integer with $q\geq 2$. Define the \emph{Euler sum} of index $(\pi,q)$ by
\begin{equation}\label{ES}
S_{\pi,q}:=S_{\pi_1\pi_2\cdots\pi_k,q}
    =\sum_{n=1}^{\infty}\frac{H_n^{(\pi_1)}H_n^{(\pi_2)}\cdots H_n^{(\pi_k)}}{n^q}\,.
\end{equation}
The quantity $\pi_1+\cdots+\pi_k+q$ is called the ``\emph{weight}'' of the sum, and the quantity $k$ is called the ``\emph{degree}'' (see \cite[Section 1]{FlSa98}). Since repeated summands in partitions are indicated by powers, we denote, for instance, the sum
\[
S_{1^22^35,q}:=S_{112225,q}
    =\sum_{n=1}^{\infty}\frac{H_n^2(H_n^{(2)})^{3}H_n^{(5)}}{n^q}\,.
\]
The Euler sums have a history that can be traced back to 1742. In response to a letter from Goldbach, Euler considered sums $S_{p,q}$ (see Berndt \cite[p. 253]{Berndt85.1}). These sums are named as \emph{linear Euler sums} today, and those involving products of at least two harmonic numbers are called \emph{nonlinear Euler sums}.

Additionally, let
\[
\bar{H}_0^{(r)}=0\quad\text{and}\quad \bar{H}_n^{(r)}=\sum_{k=1}^n\frac{(-1)^{k-1}}{k^r}
    \quad\text{for }n,r=1,2,\ldots
\]
be the \emph{alternating harmonic numbers}, and define the two kinds of \emph{alternating Euler sums} by
\begin{align}
&S_{i_1\cdots i_l\bar{i}_{l+1}\cdots \bar{i}_m,q}=\sum_{n=1}^{\infty}
    \frac{H_n^{(i_1)}\cdots H_n^{(i_l)}\bar{H}_n^{(i_{l+1})}\cdots\bar{H}_n^{(i_m)}}{n^q}
    \,,\label{AES1}\\
&S_{i_1\cdots i_l\bar{i}_{l+1}\cdots \bar{i}_m,\bar{q}}=\sum_{n=1}^{\infty}(-1)^{n-1}
    \frac{H_n^{(i_1)}\cdots H_n^{(i_l)}\bar{H}_n^{(i_{l+1})}\cdots\bar{H}_n^{(i_m)}}{n^q}
    \,,\label{AES2}
\end{align}
where $1\leq i_1\leq\cdots\leq i_l$ and $1\leq i_{l+1}\leq\cdots\leq i_m$. It is obvious that when $m-l=0$, the alternating Euler sums (\ref{AES1}) reduce to the classical non-alternating Euler sums (\ref{ES}).

Besides Euler and Goldbach, many mathematicians studied the problem of Euler sums. For example, in 1994, Bailey et al. \cite{BaiBG94.EM} conducted a rather extensive numerical search to determine whether or not some particular (alternating) Euler sums can be expressed as rational linear combinations of several given constants. In 1995, Borwein et al. \cite{BorBG95.PEM} proved that the quadratic sums $S_{1^2,q}$ can reduce to linear sums $S_{2,q}$ and polynomials in zeta values, i.e., the values of the Riemann zeta function $\ze(s)=\sum_{k=1}^{\infty}1/k^s$ at the positive integers. In 1998, Flajolet and Salvy \cite{FlSa98} used the contour integral representations and residue computation to study several classes of Euler sums. Other works on (alternating) Euler sums can be found in \cite{Adam97,BorBor95.OAI,Boyad02,ChenKW16,ChenChu09,ChSri05,Chu97.HSR,Coff03,CopCa10.AZF,DeDo91,
Frei05,Fur11,Mezo14,PanPro05,RaSri02,ShenLC95,Sit87,SunPing03,SunPing07,Val16.1,ValFur16.2,
ZhengDY07}. Despite the tremendous efforts, it can be seen that most of the (alternating) sums remain unknown, and how to compute these sums by general formulas is still a challenge to us.

Recently, rapid progress has been made in this field. Using the Bell polynomials, generating functions, integrals of special functions, multiple zeta (star) values, the Stirling sums and the Tornheim type series, we study the (alternating) Euler sums systematically \cite{WangLyu18.ESSS,WangXu17.ESW,Xu18.EEHS,Xu17.MZVES,Xu17.SECES,XuCai18.OHNN,XuCh16.FACM,
XuCh16.7Sum,XuLi17.TTS,XuYS16.ESI,XuYZ17.EEQES}. As a consequence, the evaluation of all the unknown Euler sums up to the weight $11$ are presented, and a basis of Euler sums of weight $3\leq w \leq 11$ is
\[
\begin{array}{ll}
w=3:&\ze(3),\\
w=4:&\ze(4),\\
w=5:&\ze(5),\ \ze(2)\ze(3),\\
w=6:&\ze(6),\ \ze^2(3),\\
w=7:&\ze(7),\ \ze(2)\ze(5),\ \ze(3)\ze(4),\\
w=8:&\ze(8),\ \ze(3)\ze(5),\ \ze(2)\ze^2(3),\ S_{2,6},\\
w=9:&\ze(9),\ \ze(2)\ze(7),\ \ze(3)\ze(6),\ \ze(4)\ze(5),\ \ze^3(3),\\
w=10:&\ze(10),\ \ze(3)\ze(7),\ \ze^2(5),\ \ze(2)\ze(3)\ze(5),\ \ze^2(3)\ze(4),
    \ \ze(2)S_{2,6},\ S_{2,8},\\
w=11:&\ze(11),\ \ze(2)\ze(9),\ \ze(3)\ze(8),\ \ze(4)\ze(7),\ \ze(5)\ze(6),\ \ze^2(3)\ze(5),
    \ \ze(2)\ze^3(3),\\
    &\ze(3)S_{2,6},\ S_{12,8}.
\end{array}
\]

On the other hand, according to Hoffman \cite{Hoff92} and Zagier \cite{Zag92}, the \emph{multiple zeta values} (MZVs) are defined by
\[
\ze(s_1,s_2,\ldots,s_k):=\sum_{n_1>n_2>\cdots>n_k\geq1}
    \frac{1}{n_1^{s_1}n_2^{s_2}\cdots n_k^{s_k}}\,,
\]
where $s_1,s_2,\ldots,s_k$ are all positive integers with $s_1>1$. The $k$ in the above definition is named the ``\emph{length}'' or ``\emph{depth}'' of a MZV, and the $n=s_1+s_2+\cdots+s_k$ is known as the ``\emph{weight}''. Usually, when writing MZVs, we denote $n$ repetitions of a substring by $\{\cdots\}_n$. For example, $\ze(2,1,2,1,3)=\ze(\{2,1\}_2,3)$. The partial sums
\begin{equation}\label{MHS}
\ze_n(s_1,s_2,\ldots,s_k):=\sum_{n\geq n_1>n_2>\cdots>n_k\geq 1}
    \frac{1}{n_1^{s_1}n_2^{s_2}\cdots n_k^{s_k}}
\end{equation}
are called the \emph{multiple harmonic sums} (MHSs) \cite{HPHPT14}. By convention, $\ze_n(s_1,s_2,\ldots,s_k)=0$ for $n<k$, and $\ze_n(\emptyset)=1$.

Similarly, the \emph{alternating multiple harmonic sums} (alternating MHSs) are defined by
\[
\ze_n(s_1,s_2,\ldots,s_k;\si_1,\si_2,\ldots,\si_k)=\sum_{n\geq n_1>n_2>\cdots>n_k\geq 1}
    \frac{\si_1^{n_1}\si_2^{n_2}\cdots \si_k^{n_k}}{n_1^{s_1}n_2^{s_2}\cdots n_k^{s_k}}\,,
\]
where $s_j$ are positive integers, $\si_j=\pm1$, for $j=1,2,\ldots,k$, with $(s_1,\si_1)\neq(1,1)$. The limit cases of alternating MHSs give rise to \emph{alternating multiple zeta values} (alternating MZVs for short; also called $k$-fold Euler sums) \cite{BBV2010,BorBB97,HPHPT14}:
\begin{equation}\label{AMZV.def1}
\ze(s_1,s_2,\ldots,s_k;\si_1,\si_2,\ldots,\si_k)
    =\lim_{n\to\infty}\ze_n(s_1,s_2,\ldots,s_k;\si_1,\si_2,\ldots,\si_k)\,.
\end{equation}
For convenience, when writing alternating MHSs and MZVs, we shall combine the strings of exponents and signs into a single string, with $s_j$ in the $j$th position when $\si_j=+1$, and $\bar{s}_j$ in the $j$th position when $\si_j=-1$. For example,
\[
\ze(\bar{s}_1,s_2,\ldots,\bar{s}_k)=\sum_{n_1>n_2>\cdots>n_k\geq 1}
    \frac{(-1)^{n_1+n_k}}{n_1^{s_1}n_2^{s_2}\cdots n_k^{s_k}}\,.
\]
In particular, it is known that
\begin{equation}\label{AZV}
\ze(\bar{s})=\sum_{n=1}^{\infty}\frac{(-1)^n}{n^s}=(2^{1-s}-1)\ze(s)\,,
\end{equation}
with $\ze(\bar{1})=-\ln(2)$.

MZVs and alternating MZVs have been of interest to mathematicians and physicists for a long time.
The systematic study of them began in the early 1990s with the works of Hoffman \cite{Hoff92,Hoff97} and Zagier \cite{Zag92}, but the number $\ze(\bar{6},\bar{2})$ appeared in the quantum field theory literature in 1986 \cite{Br1986}, well before the phrase ``multiple zeta values'' had been coined. They are essential to the connection of knot theory with quantum field theory \cite{Br2013,Kassel95}, and they became even more important when higher order calculations in quantum electrodynamics and quantum chromodynamics started to need the multiple harmonic sums \cite{Bl1999,BBV2010}.

According to Minh and Petitot \cite{MinhPet2000,Peti09}, all the MZVs up to the weight $16$ are reducible to $\ze(2)$, $\ze(6,2)$, $\ze(8,2)$, $\ze(10,2)$, $\ze(8,2,1)$, $\ze(8,2,1,1)$, etc., and $\ze(s)$, where $s=3,5,7,9,11,13,15$. The evaluations of the MZVs up to the weight $9$ and some MZVs of weight $10$ are presented in \cite{MinhPet2000}, and a Maple program to compute MZVs up to the weight $16$ is given in \cite{Peti09}. Bl\"{u}mlein et al. \cite{BBV2010} further investigated the MZVs up to the weight $22$ to derive basis-representations for all individual values by the Multiple Zeta Value data mine (henceforth MZVDM). Similarly, extensive tables of the alternating MZVs were compiled, first by Bigotte et al. by means of Lyndon words and shuffle algebras \cite{BJOP2002}, and later by Bl\"{u}mlein et al. as part of MZVDM \cite{BBV2010}. A basis for the $\mathbb{Q}$-vector space spanned by the set of alternating MZVs of weight $1\leq w\leq 6$ is also given in \cite{BBV2010}:
\[
\begin{array}{ll}
w=1:&\ln(2),\\
w=2:&\ze(2),\ \ln^2(2),\\
w=3:&\ze(3),\ \ze(2)\ln2,\ \ln^3(2),\\
w=4:&\Li_4(1/2),\ \ze(4),\ \ze(3)\ln(2),\ \ze(2)\ln^2(2),\ \ln^4(2),\\
w=5:&\Li_5(1/2),\ \ze(5),\ \Li_4(1/2)\ln(2),\ \ze(4)\ln(2),\ \ze(3)\ze(2),\ \ze(3)\ln^2(2),\\
    &\ze(2)\ln^3(2),\ \ln^5(2),\\
w=6:&\Li_6(1/2),\ \ze(6),\ \ze(\bar{5},\bar{1}),\ \Li_5(1/2)\ln(2),\ \ze(5)\ln(2),\
        \Li_4(1/2)\ze(2),\ \Li_4(1/2)\ln^2(2),\\
    &\ze(4)\ln^2(2),\ \ze^2(3),\ \ze(3)\ze(2)\ln(2),\ \ze(3)\ln^3(2),\ \ze(2)\ln^4(2),\
        \ln^6(2),
\end{array}
\]
where $\Li_q(t)$ are the polylogarithm functions defined by
$\Li_q(t)=\sum_{k=1}^{\infty}t^k/k^q$ for $|t|<1$.

As remarked by Flajolet and Salvy \cite{FlSa98}, every Euler sum of weight $w$ and degree $k$ is a $\mathbb{Q}$-linear combination of MZVs of weight $w$ and depth at most $k+1$. However, in the literature, there is no formula completely revealing this relation. In this paper, using the properties of permutations and compositions, we establish two explicit formulas for the Euler sums, and show that all the Euler sums are indeed expressible in terms of MZVs. Moreover, we apply this method to the alternating Euler sums, and show that all the alternating Euler sums are reducible to alternating MZVs. The corresponding Maple programs are also provided, so that the expressions of the (alternating) Euler sums in terms of (alternating) MZVs can be computed automatically. If we further use the evaluations of (alternating) MZVs, the explicit formulas of (alternating) Euler sums in terms of the bases can be finally determined.

The paper is organized as follows. In Sections \ref{Sec.formulas} and \ref{Sec.ge.re}, we use the permutations and compositions to establish the explicit formulas of (alternating) Euler sums in terms of (alternating) MZVs. In Sections \ref{Sec.sp.ca} and \ref{Sec.AES.spca}, we consider some special cases of the general results, including the expressions of $S_{r^m,q}$, $S_{r^m,\bar{q}}$, $S_{\bar{r}^m,q}$ and $S_{\bar{r}^m,\bar{q}}$. Several famous results can also be obtained, such as the Euler theorem, the Borwein--Borwein--Girgensohn theorems and the Flajolet--Salvy theorems. Finally, in Section \ref{Sec.Maple}, we introduce our Maple package briefly.


\section{Explicit formulas of Euler sums in terms of MZVs}\label{Sec.formulas}

By convention, let $\mathcal{S}_m$ be the \emph{symmetric group} of all the permutations on $m$ symbols. In addition, define a \emph{composition} of $m$ be an ordered tuple of positive integers whose elements sum to $m$, and let $\mathcal{C}_m$ be the set of all the compositions of $m$. For example, there are eight compositions of 4:
\[
\mathcal{C}_4=\{(4),(1,3),(2,2),(3,1),(1,1,2),(1,2,1),(2,1,1),(1,1,1,1)\}\,.
\]
Using permutations and compositions, the following explicit formula of the Euler sums can be established.

\begin{theorem}\label{Th.ES.eq1}
Let $i_1,i_2,\ldots,i_m\geq 1$ and $q\geq 2$ be positive integers. For a composition $\xi:=(\xi_1,\xi_2,\ldots,\xi_p)\in\mathcal{C}_m$ and a permutation $\si\in\mathcal{S}_m$, denote
\[
J_c(\xi,\si)=i_{\si(\xi_1+\cdots+\xi_{c-1}+1)}+\cdots+i_{\si(\xi_1+\cdots+\xi_c)}\,,
    \quad\text{for } c=1,2,\ldots,p\,.
\]
Then the Euler sums $S_{i_1i_2\cdots i_m,q}$ are expressible in terms of MZVs:
\begin{equation}\label{ES.eq1}
S_{i_1i_2\cdots i_m,q}
    =\sum_{\xi\in\mathcal{C}_m}\sum_{\si\in\mathcal{S}_m}
        \frac{1}{\xi_1!\xi_2!\cdots\xi_p!}
        \bibb{
            \begin{array}{c}
                \ze(q,J_1(\xi,\si),J_2(\xi,\si),\ldots,J_p(\xi,\si))\\
                +\ze(q+J_1(\xi,\si),J_2(\xi,\si),\ldots,J_p(\xi,\si))
            \end{array}
        }\,.
\end{equation}
\end{theorem}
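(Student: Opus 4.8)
The plan is to reduce the Euler sum first to multiple harmonic sums (MHSs) by a stuffle-type expansion of the product of harmonic numbers, and then to pass to MZVs by summing against $1/n^q$. First I would write each factor as $H_n^{(i_j)}=\ze_n(i_j)=\sum_{k=1}^n 1/k^{i_j}$ and multiply out, obtaining
\[
H_n^{(i_1)}\cdots H_n^{(i_m)}=\sum_{1\le k_1,\ldots,k_m\le n}\frac{1}{k_1^{i_1}\cdots k_m^{i_m}}\,.
\]
The key step is to regroup this sum according to the order type of the tuple $(k_1,\ldots,k_m)$: one partitions $\{1,\ldots,m\}$ into blocks of indices sharing a common value and orders the blocks by strictly decreasing value $n\ge m_1>m_2>\cdots>m_p\ge1$. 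An ordered set partition with block sizes $\xi_1,\ldots,\xi_p$ then contributes $\prod_{c}m_c^{-J_c}$, where $J_c$ is the sum of the exponents $i_j$ over the indices $j$ lying in block $c$, which is exactly $J_c(\xi,\si)$.

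Next I would parametrize these ordered set partitions by pairs $(\xi,\si)\in\mathcal{C}_m\times\mathcal{S}_m$: the composition $\xi$ fixes the block sizes, while $\si$ distributes the indices, so that block $c$ consists of $\si(\xi_1+\cdots+\xi_{c-1}+1),\ldots,\si(\xi_1+\cdots+\xi_c)$. Since the order of the indices within a single block is immaterial, each ordered set partition is realized by exactly $\xi_1!\cdots\xi_p!$ permutations $\si$; dividing by this multinomial factor cancels the overcounting and yields the MHS identity
\[
H_n^{(i_1)}\cdots H_n^{(i_m)}=\sum_{\xi\in\mathcal{C}_m}\sum_{\si\in\mathcal{S}_m}\frac{1}{\xi_1!\cdots\xi_p!}\,\ze_n\bigl(J_1(\xi,\si),\ldots,J_p(\xi,\si)\bigr)\,.
\]
I would check this against the case $m=2$, where it should reproduce $H_n^{(a)}H_n^{(b)}=\ze_n(a+b)+\ze_n(a,b)+\ze_n(b,a)$, to confirm the normalization.

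Finally I would multiply by $1/n^q$ and sum over $n\ge1$. For a fixed pair $(\xi,\si)$, expanding $\ze_n(J_1,\ldots,J_p)$ gives a sum over $n\ge n_1>\cdots>n_p\ge1$ with the additional factor $1/n^q$ carrying the constraint $n\ge n_1$. Splitting this constraint into $n>n_1$ and $n=n_1$ produces exactly the two MZVs $\ze(q,J_1,\ldots,J_p)$ and $\ze(q+J_1,J_2,\ldots,J_p)$ inside the braces of \eqref{ES.eq1}, the second arising because $n=n_1$ merges the exponents $q$ and $J_1$; interchanging this with the finite double sum over $(\xi,\si)$ then gives the claimed formula. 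The point requiring care is convergence and the legitimacy of the rearrangement: since $q\ge2$, every MZV that appears has leading argument $q$ or $q+J_1\ge2$, so all the series converge absolutely and the interchanges are justified. I expect the main obstacle to be the combinatorial bookkeeping behind the multinomial factor---rigorously verifying that precisely $\xi_1!\cdots\xi_p!$ pairs $(\xi,\si)$ realize each ordered set partition---rather than any analytic subtlety.
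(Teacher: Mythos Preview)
Your proposal is correct and follows essentially the same route as the paper: expand the product of harmonic numbers by classifying the weak orderings of the summation indices via compositions and permutations, divide by $\xi_1!\cdots\xi_p!$ to correct for overcounting within blocks, and then sum against $1/n^q$. You actually supply more detail than the paper on the final step (splitting $n\ge n_1$ into $n>n_1$ and $n=n_1$) and on convergence, both of which the paper leaves implicit.
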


\begin{proof}
Let us compute
\begin{equation}\label{pro.HarNu.2}
\prod_{j=1}^mH_n^{(i_j)}=\sum_{n\geq n_1,n_2,\ldots,n_m\geq 1}\frac{1}{n_1^{i_1}n_2^{i_2}\cdots n_m^{i_m}}\,.
\end{equation}
To deal with the multiple sum on the right, we should consider all the possible weak orderings on $\{n_1,n_2,\ldots,n_m\}$. These fall into natural classes related to compositions of $m$. Given a composition $\xi=(\xi_1,\xi_2,\ldots,\xi_p)\in\mathcal{C}_m$, we obtain the natural ordering
\begin{align}
n\geq n_1=\cdots=n_{\xi_1}
    &>n_{\xi_1+1}=\cdots=n_{\xi_1+\xi_2}\nonumber\\
    &>\cdots>n_{\xi_1+\cdots+\xi_{p-1}+1}=\cdots=n_m\geq 1\,,\label{xi.ordering.1}
\end{align}
which corresponds to the multiple harmonic sum
\[
\ze_n(i_1+\cdots+i_{\xi_1},i_{\xi_1+1}+\cdots+i_{\xi_1+\xi_2},\ldots,
    i_{\xi_1+\cdots+\xi_{p-1}+1}+\cdots+i_m)\,.
\]
It is evident that any ordering in the class of composition $\xi=(\xi_1,\xi_2,\ldots,\xi_p)$ can be obtained by applying a permutation of $\{1,2,\ldots,m\}$, say $\si$, to (\ref{xi.ordering.1}):
\begin{align*}
n\geq n_{\si(1)}=\cdots=n_{\si(\xi_1)}&>n_{\si(\xi_1+1)}=\cdots=n_{\si(\xi_1+\xi_2)}\\
    &>\cdots>n_{\si(\xi_1+\cdots+\xi_{p-1}+1)}=\cdots=n_{\si(m)}\geq 1\,.
\end{align*}
But swaps of equal elements do not matter, so we must divide the intermediate result by $\xi_1!\xi_2!\cdots\xi_p!$. Thus, the distinct weak orderings in the class $\xi=(\xi_1,\xi_2,\cdots,\xi_p)$ contribute
\[
\sum_{\si\in\mathcal{S}_m}\frac{\ze_n(J_1(\xi,\si),J_2(\xi,\si),\ldots,J_p(\xi,\si))}
    {\xi_1!\xi_2!\cdots\xi_p!}
\]
to the sum (\ref{pro.HarNu.2}), and summing over $\mathcal{C}_m$ gives
\begin{equation}\label{pro.HarNu.1}
\prod_{j=1}^mH_n^{(i_j)}
    =\sum_{\xi\in\mathcal{C}_m}\sum_{\si\in\mathcal{S}_m}
    \frac{\ze_n(J_1(\xi,\si),J_2(\xi,\si),\ldots,J_p(\xi,\si))}
    {\xi_1!\xi_2!\cdots\xi_p!}\,.
\end{equation}
Finally, multiplying (\ref{pro.HarNu.1}) by $1/n^q$, summing over $n$, and using the definitions of Euler sums and MZVs, we obtain the result (\ref{ES.eq1}).
\end{proof}


Besides Theorem \ref{Th.ES.eq1}, we can establish another explicit formula of the Euler sums by permutations and compositions.

\begin{theorem}\label{Th.ES.eq2}
Let $i_1,i_2,\ldots,i_m\geq 2$ and $q\geq 2$ be positive integers. For a composition $\eta:=(\eta_1,\eta_2,\ldots,\eta_p)\in\mathcal{C}_l$ and a permutation $\tau\in\mathcal{S}_l$, denote
\[
\tilde{J}_c(\eta,\tau)
    =i_{j_{\tau(\eta_1+\cdots+\eta_{c-1}+1)}}+\cdots+i_{j_{\tau(\eta_1+\cdots+\eta_c)}}\,,
    \quad\text{for } c=1,2,\ldots,p\,.
\]
Then the Euler sums $S_{i_1i_2\cdots i_m,q}$ are expressible in terms of MZVs:
\begin{align}
&S_{i_1i_2\cdots i_m,q}\nonumber\\
&\quad=\sum_{l=0}^m(-1)^l
    \sum_{1\leq j_1<\cdots<j_l\leq m}
        \frac{\prod\limits_{k=1}^m\ze(i_k)}{\prod\limits_{k=1}^l\ze(i_{j_k})}
        \sum_{\eta\in\mathcal{C}_l}\sum_{\tau\in\mathcal{S}_l}
        \frac{\ze(\tilde{J}_1(\eta,\tau),\tilde{J}_2(\eta,\tau),\ldots,
            \tilde{J}_p(\eta,\tau),q)}
            {\eta_1!\eta_2!\cdots\eta_p!}\,.\label{ES.eq2}
\end{align}
\end{theorem}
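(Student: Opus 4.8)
The plan is to reduce the statement to the combinatorial identity already proved in Theorem~\ref{Th.ES.eq1}, but applied to \emph{tails} rather than partial sums. The starting point is again $S_{i_1\cdots i_m,q}=\sum_{n=1}^\infty n^{-q}\prod_{j=1}^m\ze_n(i_j)$, only now I would split each factor as a completed zeta value minus its tail, $\ze_n(i_j)=\ze(i_j)-\sum_{k>n}k^{-i_j}$. The hypothesis $i_j\geq 2$ (strictly stronger than the $i_j\geq 1$ needed in Theorem~\ref{Th.ES.eq1}) is essential exactly here: it guarantees that every tail $\sum_{k>n}k^{-i_j}$ converges.

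First I would expand the product over all subsets $S\subseteq\{1,\ldots,m\}$, which produces the sign $(-1)^{|S|}$ and the inclusion--exclusion structure of (\ref{ES.eq2}):
\[
\prod_{j=1}^m\ze_n(i_j)
    =\sum_{S\subseteq\{1,\ldots,m\}}(-1)^{|S|}
        \Bigl(\prod_{k\notin S}\ze(i_k)\Bigr)
        \prod_{k\in S}\Bigl(\sum_{m_k>n}\frac{1}{m_k^{i_k}}\Bigr)\,,
\]
so that, after multiplying by $n^{-q}$ and summing,
\[
S_{i_1\cdots i_m,q}
    =\sum_{S}(-1)^{|S|}\Bigl(\prod_{k\notin S}\ze(i_k)\Bigr)
        \sum_{n=1}^{\infty}\frac{1}{n^q}\prod_{k\in S}\Bigl(\sum_{m_k>n}\frac{1}{m_k^{i_k}}\Bigr)\,.
\]

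The core step is to evaluate, for a fixed $S=\{j_1<\cdots<j_l\}$ of size $l$, the product of tails. Here I would reuse verbatim the weak-ordering argument from the proof of Theorem~\ref{Th.ES.eq1}: the $l$ summation variables, now all constrained to exceed $n$, fall into classes indexed by a composition $\eta\in\mathcal{C}_l$ and a permutation $\tau\in\mathcal{S}_l$, with swaps of equal elements corrected by dividing by $\eta_1!\cdots\eta_p!$. This gives $\prod_{k\in S}(\sum_{m_k>n}m_k^{-i_k})=\sum_{\eta\in\mathcal{C}_l}\sum_{\tau\in\mathcal{S}_l}(\eta_1!\cdots\eta_p!)^{-1}\sum_{m_1>\cdots>m_p>n}m_1^{-\tilde{J}_1}\cdots m_p^{-\tilde{J}_p}$, with $\tilde{J}_c=\tilde{J}_c(\eta,\tau)$ the block-sum of the statement.

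Finally I would multiply by $n^{-q}$ and sum over $n\geq 1$. Because every tail index is \emph{strictly} larger than $n$, the chain $m_1>\cdots>m_p>n\geq 1$ is exactly the summation region of $\ze(\tilde{J}_1,\ldots,\tilde{J}_p,q)$, so $q$ now appears as the final (smallest-index) entry rather than the leading one as in Theorem~\ref{Th.ES.eq1}; convergence holds since the leading exponent is $\tilde{J}_1\geq 2$ (and the $l=0$ term is simply $\ze(q)\prod_{k=1}^m\ze(i_k)$). Collecting by $l=|S|$ and rewriting $\prod_{k\notin S}\ze(i_k)=\prod_{k=1}^m\ze(i_k)/\prod_{k=1}^l\ze(i_{j_k})$ then yields (\ref{ES.eq2}). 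The main obstacle is analytic rather than combinatorial: unlike the finite sums truncated at $n$ in Theorem~\ref{Th.ES.eq1}, the tails are genuine infinite series, so the interchange of the tail summations with the outer sum over $n$ must be justified by absolute convergence, which is exactly what $i_j\geq 2$ and $q\geq 2$ provide.
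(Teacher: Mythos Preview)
Your proposal is correct and follows essentially the same approach as the paper: write each harmonic number as $\zeta(i_j)$ minus its tail, expand the product by inclusion--exclusion over subsets (the paper's $\{j_1<\cdots<j_l\}$), apply the weak-ordering decomposition of Theorem~\ref{Th.ES.eq1} to the product of tails to obtain $\zeta_{>n}(\tilde{J}_1,\ldots,\tilde{J}_p)$, and then sum over $n$ to place $q$ in the final slot. Your explicit remark on the role of $i_j\geq 2$ for convergence of the tails is a welcome clarification that the paper leaves implicit.
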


\begin{proof}
The product of harmonic numbers can be rewritten as
\begin{align*}
\prod_{j=1}^mH_n^{(i_j)}&=\prod_{j=1}^m\{\ze(i_j)-(\ze(i_j)-H_n^{(i_j)})\}\nonumber\\
    &=\sum_{l=0}^m(-1)^l\sum_{1\leq j_1<\cdots<j_l\leq m}
        \frac{\prod\limits_{k=1}^m\ze(i_k)}{\prod\limits_{k=1}^l\ze(i_{j_k})}
        \prod_{k=1}^l\{\ze(i_{j_k})-H_n^{(i_{j_k})}\}\,.
\end{align*}
Therefore, the Euler sums $S_{i_1i_2\cdots i_m,q}$ satisfy
\begin{equation}\label{ES.eq2.temp}
S_{i_1i_2\cdots i_m,q}=\sum_{l=0}^m(-1)^l\sum_{1\leq j_1<\cdots<j_l\leq m}
    \frac{\prod\limits_{k=1}^m\ze(i_k)}{\prod\limits_{k=1}^l\ze(i_{j_k})}
    \sum_{n=1}^{\infty}\frac{1}{n^q}\prod_{k=1}^l\{\ze(i_{j_k})-H_n^{(i_{j_k})}\}\,.
\end{equation}
Similarly to the proof of (\ref{pro.HarNu.1}), by means of permutations and compositions, we have
\begin{align*}
\prod_{k=1}^l\{\ze(i_{j_k})-H_n^{(i_{j_k})}\}
    &=\sum_{n_1,n_2,\ldots,n_l>n}\frac{1}{n_1^{i_{j_1}}n_2^{i_{j_2}}\cdots n_l^{i_{j_l}}}\\
    &=\sum_{\eta\in\mathcal{C}_l}\sum_{\tau\in\mathcal{S}_l}
        \frac{\ze_{>n}(\tilde{J}_1(\eta,\tau),\tilde{J}_2(\eta,\tau),\ldots,\tilde{J}_p(\eta,\tau))}
        {\eta_1!\eta_2!\cdots\eta_p!}\,,
\end{align*}
where $\ze_{>n}(s_1,s_2,\ldots,s_k)$ are multiple sums defined by
\[
\ze_{>n}(s_1,s_2,\ldots,s_k):=\sum_{n_1>n_2>\cdots>n_k>n}\frac{1}{n_1^{s_1}n_2^{s_2}\cdots n_k^{s_k}}\,.
\]
Then using the definition of MZVs, we obtain from (\ref{ES.eq2.temp}) the final result (\ref{ES.eq2}).
\end{proof}


\section{Some special cases of Theorems \ref{Th.ES.eq1} and \ref{Th.ES.eq2}}\label{Sec.sp.ca}

In this section, by the evaluations and identities of MZVs, we give some special cases of Theorems \ref{Th.ES.eq1} and \ref{Th.ES.eq2}. Some famous results will be obtained, such as the Euler theorem, the Borwein--Borwein--Girgensohn theorems and the Flajolet--Salvy theorems.

According to \cite[Section 6]{Adam97}, the MZVs $\ze(k+1,\{1\}_l)$ can be determined by the values of the integrals $W(k,l)$:\begin{equation}\label{ze.k.1}
\ze(k+1,\{1\}_l)=\frac{(-1)^{k+l}}{k!l!}W(k,l)
\end{equation}
(see also \cite[Eq. (2.18)]{WangXu17.ESW} and \cite[Eq. (2.27)]{Xu17.MZVES}). Here the integrals
\[
W(k,l):=\int_0^1\frac{\ln^k(t)\ln^l(1-t)}{1-t}\uud t
\]
were introduced by K\"{o}lbig \cite{Kolbig82.CEF,Kolbig86.NGP} in 1980s, and reconsidered recently by us \cite{WangLyu18.ESSS,WangXu17.ESW,Xu17.MZVES,XuYS16.ESI} to study the Euler sums. From these works, we know that
\begin{align}
W(k,l)&=\frac{(-1)^{k+l}(k+l)!\ze(k+l+1)}{l+1}\nonumber\\
&\quad-\sum_{i=1}^{k-1}\sum_{j=1}^{l}\binom{k-1}{i-1}\binom{l}{j}
    (-1)^{i+j}(i+j-1)!\ze(i+j)W(k-i,l-j)\label{Wkl.rec}
\end{align}
for $k\geq 1$ and $l\geq 0$, with
\[
W(1,l)=(-1)^{l+1}l!\ze(l+2)\,,\quad
W(k,0)=(-1)^kk!\ze(k+1)\,,
\]
and
\begin{equation}\label{Wkl.sym}
\frac{W(k,l-1)}{k!(l-1)!}=\frac{W(l,k-1)}{l!(k-1)!}\,.
\end{equation}
By (\ref{ze.k.1})--(\ref{Wkl.sym}), the MZVs $\ze(k+1,\{1\}_l)$ are reducible to zeta values. For example, we have
\begin{align*}
\ze(q,1)&=\frac{q}{2}\ze(q+1)-\frac{1}{2}\sum_{i=1}^{q-2}\ze(i+1)\ze(q-i)\,,\\
\ze(q,1,1)&=\frac{q(q+1)}{6}\ze(q+2)+\frac{1}{2}\ze(2)\ze(q)
        -\frac{q}{4}\sum_{j=0}^{q-2}\ze(j+2)\ze(q-j)\nonumber\\
    &\quad+\frac{1}{6}\sum_{j=2}^{q-2}\ze(q-j)\sum_{i=0}^{j-2}\ze(i+2)\ze(j-i)\,,
\end{align*}
which, combined with Theorem \ref{Th.ES.eq1}, give the next two corollaries.

\begin{corollary}[the Euler theorem and the Borwein--Borwein--Girgensohn theorem]
For integers $p\geq 1$ and $q\geq 2$, the linear Euler sums $S_{p,q}$ satisfy
\begin{align}
&S_{p,q}=\ze(q,p)+\ze(p+q)\,,\label{S.pq}\\
&S_{1,q}=\smbb{\frac{q}{2}+1}\ze(q+1)-\frac{1}{2}\sum_{i=1}^{q-2}\ze(i+1)\ze(q-i)\,.\label{S.1q}
\end{align}
In particular, for any odd weight $w=p+q$, the sums $S_{p,q}$ satisfy
\begin{align}
S_{p,q}&=\frac{1-(-1)^p}{2}\ze(p)\ze(q)
    +\frac{1}{2}\ze(w)\bibb{1-(-1)^p\binom{w-1}{q}-(-1)^p\binom{w-1}{p}}\nonumber\\
&\quad+(-1)^p\sum_{k=1}^{\frac{w-1}{2}}
    \bibb{\binom{w-2k-1}{p-1}+\binom{w-2k-1}{q-1}}\ze(2k)\ze(w-2k)\,,\label{S.pq.odd}
\end{align}
where $\ze(1)$ should be interpreted as $0$ wherever it occurs.
\end{corollary}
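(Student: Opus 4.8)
The plan is to read all three identities off Theorem~\ref{Th.ES.eq1} together with known reductions of double zeta values.

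First I would establish (\ref{S.pq}) by specializing Theorem~\ref{Th.ES.eq1} to $m=1$, $i_1=p$. The set $\mathcal{C}_1$ contains only $\xi=(1)$ and $\mathcal{S}_1$ only the identity, so $J_1(\xi,\si)=i_1=p$ and $\xi_1!=1$; the double sum in (\ref{ES.eq1}) collapses to the single term $\ze(q,p)+\ze(p+q)$, which is exactly (\ref{S.pq}). Formula (\ref{S.1q}) is then the case $p=1$: substituting the quoted evaluation of $\ze(q,1)$ into $S_{1,q}=\ze(q,1)+\ze(q+1)$ and collecting the two multiples of $\ze(q+1)$ yields (\ref{S.1q}).

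For (\ref{S.pq.odd}) I would again begin from $S_{p,q}=\ze(q,p)+\ze(w)$ with $w=p+q$, and reduce the double zeta value $\ze(q,p)$ of odd weight to products of single zeta values. Two inputs are needed. The stuffle relation
\[
\ze(p)\ze(q)=\ze(p,q)+\ze(q,p)+\ze(w)\,,
\]
which is the $m=2$, $n\to\infty$ instance of (\ref{pro.HarNu.1}), gives one linear equation in $\ze(p,q)$ and $\ze(q,p)$. A second, independent family of relations comes from the partial-fraction identity
\[
\frac{1}{a^p b^q}
 =\sum_{j=0}^{p-1}\binom{q-1+j}{j}\frac{1}{(a+b)^{q+j}a^{p-j}}
 +\sum_{j=0}^{q-1}\binom{p-1+j}{j}\frac{1}{(a+b)^{p+j}b^{q-j}}\,:
\]
summing over $a,b\geq 1$ and taking $a+b$ as the outer index expresses $\ze(p)\ze(q)$ as a $\mathbb{Q}$-linear combination of double zeta values $\ze(q+j,p-j)$ and $\ze(p+j,q-j)$ with the binomial weights $\binom{q-1+j}{j}$, $\binom{p-1+j}{j}$. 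These reflection/sum relations are the engine of the classical theorem that double zeta values of \emph{odd} weight are reducible: carrying out the reduction for $\ze(q,p)$ eliminates every genuinely nested contribution and leaves a combination of the products $\ze(2k)\ze(w-2k)$ together with a multiple of $\ze(w)$. Adding back $\ze(w)$ and regrouping the coefficients into the symmetric binomials $\binom{w-2k-1}{p-1}+\binom{w-2k-1}{q-1}$ then produces (\ref{S.pq.odd}).

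The conceptual step — that odd-weight depth-two MZVs reduce — is classical, so the main obstacle is purely combinatorial: tracking the binomial coefficients through the partial-fraction sum, isolating the coefficient $\tfrac12\{1-(-1)^p\binom{w-1}{q}-(-1)^p\binom{w-1}{p}\}$ of $\ze(w)$, and handling the boundary index $k=(w-1)/2$. One must also fix the convention $\ze(1)=0$ so that the $p=1$ instance of (\ref{S.pq.odd}) is consistent with (\ref{S.1q}).
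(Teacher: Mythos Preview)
Your derivations of (\ref{S.pq}) and (\ref{S.1q}) match the paper's exactly: specialize Theorem~\ref{Th.ES.eq1} at $m=1$, then set $p=1$ and insert the quoted value of $\ze(q,1)$.

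For (\ref{S.pq.odd}) the paper takes a shorter route: it simply invokes the closed formula (\ref{MZV.st}) for depth-two (alternating) MZVs of odd weight, quoted from Borwein--Bradley--Broadhurst and Teo, and combines it with the reflection formula (\ref{z.ab}) to read off the coefficients directly. Your approach instead sketches the classical \emph{derivation} of that reduction, via the partial-fraction decomposition of $1/(a^pb^q)$ summed over $a,b\ge 1$ together with the stuffle relation. Both lead to the same endpoint; the paper's version is cleaner because (\ref{MZV.st}) already packages the binomial bookkeeping you flag as the ``main obstacle,'' whereas your route is more self-contained but leaves that bookkeeping as an exercise. If you want to complete your version, the standard move is to add the partial-fraction identity for $(p,q)$ to the one for $(q,p)$, use the parity of $w$ to make the binomials telescope against the stuffle relation, and then isolate $\ze(q,p)$; the boundary handling and the $\ze(1):=0$ convention you mention are exactly the points where care is needed.
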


\begin{proof}
Setting $m=1$ and $i_1=p$ in Theorem \ref{Th.ES.eq1}, we obtain (\ref{S.pq}), which can also be verified directly by the definitions of Euler sums and MZVs (see, e.g., \cite[Eq. (2.23)]{WangXu17.ESW}). Setting $p=1$ in (\ref{S.pq}) yields (\ref{S.1q}), which is in fact the famous Euler theorem (see, e.g., \cite[Theorem 2.2]{FlSa98}). According to Borwein et al. \cite[Section 4]{BorBB97} and Teo \cite[Theorem 2.10]{Teo18}, the (alternating) MZVs of depth 2 satisfy
\begin{equation}\label{MZV.st}
\ze(s,t;\si,\tau)=\frac{1}{2}\bibb{-\la_{s+t}+(1+(-1)^s)\ze(s;\si)\ze(t;\tau)+\mu_{s+t}}
    -\sum_{0<k<(s+t)/2}\la_{2k}\mu_{s+t-2k}\,,
\end{equation}
where $s+t$ is odd, $\la_r=\ze(r;\si\tau)$ and
\[
\mu_r=(-1)^s\bibb{\binom{r-1}{s-1}\ze(r;\si)+\binom{r-1}{t-1}\ze(r;\tau)}\,.
\]
Thus, when $p+q$ is odd and $p,q>1$, by the Euler reflection formula
\begin{equation}\label{z.ab}
\ze(a,b)+\ze(b,a)=\ze(a)\ze(b)-\ze(a+b)
\end{equation}
and (\ref{MZV.st}), we obtain (\ref{S.pq.odd}), which also holds for $p=1$ if we denote $\ze(1):=0$. Note that (\ref{S.pq.odd}) is one of the Borwein--Borwein--Girgensohn theorems \cite[Theorem 3.1]{FlSa98}, extrapolated first by Euler without proof, and verified by Borwein et al. in \cite{BorBG95.PEM}.
\end{proof}

\begin{corollary}[the Flajolet--Salvy theorem and the Borwein--Borwein--Girgensohn theorem]
For integers $i_1,i_2\geq 1$ and $q\geq 2$, the quadratic Euler sums $S_{i_1i_2,q}$ satisfy
\begin{align}
S_{i_1i_2,q}&=\ze(q,i_1+i_2)+\ze(q,i_1,i_2)+\ze(q,i_2,i_1)\nonumber\\
    &\quad+\ze(q+i_1+i_2)+\ze(q+i_1,i_2)+\ze(q+i_2,i_1)\,.\label{S.r1r2q}
\end{align}
Thus, for any even weight $w=i_1+i_2+q$, with $i_1,i_2>1$, the sums $S_{i_1i_2,q}$ are reducible to linear sums and zeta values. Moreover, the sums $S_{1^2,q}$ satisfy
\begin{align}
S_{1^2,q}&=S_{2,q}+\frac{(q+1)(q+3)}{3}\ze(q+2)+\ze(2)\ze(q)
    -\frac{q+2}{2}\sum_{j=0}^{q-2}\ze(j+2)\ze(q-j)\nonumber\\
    &\quad+\frac{1}{3}\sum_{j=2}^{q-2}\ze(q-j)\sum_{i=0}^{j-2}\ze(i+2)\ze(j-i)\label{S.11q}\,.
\end{align}
Thus, the sums $S_{1^2,q}$ reduce to linear sums $S_{2,q}$ and polynomials in zeta values, and for any odd integer $q\geq 3$, the sums $S_{1^2,q}$ reduce to zeta values.
\end{corollary}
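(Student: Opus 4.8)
The plan is to obtain (\ref{S.r1r2q}) as the $m=2$ instance of Theorem \ref{Th.ES.eq1}, and then to derive the reducibility statement and the explicit formula (\ref{S.11q}) as consequences of it together with the evaluations of $\ze(q,1)$ and $\ze(q,1,1)$ recorded above.

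First I would specialize Theorem \ref{Th.ES.eq1} to $m=2$, for which $\mathcal{C}_2=\{(2),(1,1)\}$ and $\mathcal{S}_2=\{\mathrm{id},(1\,2)\}$. The composition $(2)$ has $p=1$ and $\xi_1!=2$; both permutations give $J_1=i_1+i_2$, so after dividing by $2$ and summing over $\mathcal{S}_2$ it contributes $\ze(q,i_1+i_2)+\ze(q+i_1+i_2)$. The composition $(1,1)$ has $\xi_1!\xi_2!=1$ and yields $\ze(q,i_1,i_2)+\ze(q+i_1,i_2)$ for $\mathrm{id}$ and $\ze(q,i_2,i_1)+\ze(q+i_2,i_1)$ for $(1\,2)$. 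Adding the four pieces gives exactly (\ref{S.r1r2q}).

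For the even-weight claim with $i_1,i_2>1$, I would argue that every term of (\ref{S.r1r2q}) lies in the algebra generated by linear sums and zeta values. By (\ref{S.pq}) each double zeta value is a linear sum, namely $\ze(a,b)=S_{b,a}-\ze(a+b)$; hence $\ze(q,i_1+i_2)$, $\ze(q+i_1,i_2)$ and $\ze(q+i_2,i_1)$ reduce immediately (all are admissible since their first arguments exceed $1$). The only genuinely nonlinear contribution is the symmetric pair $\ze(q,i_1,i_2)+\ze(q,i_2,i_1)$, of depth $3$ and weight $w=i_1+i_2+q$. Since $w$ is even, weight and depth have opposite parity, so the parity theorem for multiple zeta values expresses each of these triple zeta values as a $\mathbb{Q}$-linear combination of products of multiple zeta values of depth at most $2$; these factors are single and double zeta values, i.e.\ zeta values and (via (\ref{S.pq})) linear sums. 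This is the step I expect to be the main obstacle, as it rests on the depth-parity reduction of triple zeta values rather than on the elementary manipulations used elsewhere.

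Finally, for (\ref{S.11q}) I would set $i_1=i_2=1$ in (\ref{S.r1r2q}), obtaining $S_{1^2,q}=\ze(q,2)+\ze(q+2)+2\ze(q,1,1)+2\ze(q+1,1)$, and recognize $\ze(q,2)+\ze(q+2)=S_{2,q}$ by (\ref{S.pq}). Substituting the displayed evaluation of $\ze(q,1,1)$ and the evaluation of $\ze(q+1,1)$ obtained from the $\ze(q,1)$ formula with $q$ replaced by $q+1$, then reindexing the single sum coming from $\ze(q+1,1)$ by $i\mapsto i-1$ to match the range $0\leq j\leq q-2$ and collecting coefficients (so that the coefficient of $\ze(q+2)$ becomes $\tfrac{q(q+1)}{3}+(q+1)=\tfrac{(q+1)(q+3)}{3}$), produces (\ref{S.11q}); this recombination is the only computation requiring care. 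The two remaining assertions follow at once: (\ref{S.11q}) writes $S_{1^2,q}$ as $S_{2,q}$ plus a polynomial in zeta values, and for odd $q\geq 3$ the weight $q+2$ of $S_{2,q}$ is odd, so (\ref{S.pq.odd}) reduces $S_{2,q}$ to zeta values and hence so does $S_{1^2,q}$.
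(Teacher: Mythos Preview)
Your proposal is correct and follows essentially the same route as the paper's proof: specialize Theorem~\ref{Th.ES.eq1} to $m=2$ to get (\ref{S.r1r2q}), invoke the depth-$3$ reduction at even weight for the triple zeta pair (the paper cites Borwein--Girgensohn \cite{BoGir96} rather than the general parity theorem, but it is the same ingredient), and then set $i_1=i_2=1$ and substitute the evaluations of $\ze(q,1,1)$ and $\ze(q+1,1)$ to obtain (\ref{S.11q}). Your coefficient check $\tfrac{q(q+1)}{3}+(q+1)=\tfrac{(q+1)(q+3)}{3}$ is exactly the recombination the paper performs implicitly.
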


\begin{proof}
Set $m=2$ in Theorem \ref{Th.ES.eq1}. Since $\mathcal{C}_2=\{(2),(1,1)\}$ and $\mathcal{S}_2=\{(1)(2),(1,2)\}$, we obtain (\ref{S.r1r2q}). According to Borwein and Girgensohn \cite{BoGir96}, the MZVs $\ze(q,i_1,i_2)$ and $\ze(q,i_2,i_1)$ are reducible to zeta values and MZVs of depth $2$ when the weight $w=i_1+i_2+q$ is even, then for $i_1,i_2>1$, the sums $S_{i_1i_2,q}$ are reducible to linear sums and zeta values. This is in fact one of the Flajolet--Salvy theorems \cite[Theorem 4.2]{FlSa98}.

Now, setting further $i_1=i_2=1$ in (\ref{S.r1r2q}), using (\ref{S.pq}), and substituting the evaluations of $\ze(q,1)$ and $\ze(q,1,1)$, we obtain (\ref{S.11q}), which is another Borwein--Borwein--Girgensohn theorem \cite[Theorem 4.1]{FlSa98}.
\end{proof}

Similarly, setting $m=3$ in Theorem \ref{Th.ES.eq1} yields the explicit formula of $S_{i_1i_2i_3,q}$, which has 26 terms. Setting further $i_1=i_2=i_3=1$ gives
\begin{align}
S_{1^3,q}&=\ze(q,3)+\ze(q+3)+3\{\ze(q,1,2)+\ze(q+1,2)+\ze(q,2,1)+\ze(q+2,1)\}\nonumber\\
    &\quad+6\{\ze(q,\{1\}_3)+\ze(q+1,1,1)\}\,.\label{S.111q}
\end{align}
Using Eq. (\ref{S.pq}) and
\begin{equation}\label{mzv.eq32}
\ze(k,i,j)+\ze(k,j,i)=S_{ij,k}-S_{i,j+k}-S_{j,i+k}-S_{i+j,k}+2\ze(i+j+k)\,,
\end{equation}
for integers $j\geq i\geq 1$ and $k\geq 2$ (see \cite[Theorem 2.7]{WangXu17.ESW}), we rewrite (\ref{S.111q}) as
\[
S_{1^3,q}=3S_{12,q}-2S_{3,q}+6\ze(q,\{1\}_3)+6\ze(q+1,1,1)\,.
\]
This leads us to the following known result \cite[Theorem 5.1 (1)]{FlSa98}:

\begin{corollary}[the Flajolet--Salvy theorem]
For odd weights, the cubic combinations $S_{1^3,q}-3S_{12,q}$ are expressible in terms of zeta values.
\end{corollary}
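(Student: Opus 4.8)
The plan is to work directly from the reduced identity established immediately before the corollary, namely
\[
S_{1^3,q}-3S_{12,q}=-2S_{3,q}+6\ze(q,\{1\}_3)+6\ze(q+1,1,1)\,,
\]
and to argue that each of the three terms on the right is a polynomial in zeta values whenever the weight $w=3+q$ is odd. First I would dispose of the two multiple zeta values, observing that both are of the special shape $\ze(k+1,\{1\}_l)$: indeed $\ze(q,\{1\}_3)$ corresponds to $k=q-1$, $l=3$, while $\ze(q+1,1,1)=\ze(q+1,\{1\}_2)$ corresponds to $k=q$, $l=2$. By (\ref{ze.k.1}) together with the recursion (\ref{Wkl.rec}) and the symmetry relation (\ref{Wkl.sym}) for the K\"{o}lbig integrals $W(k,l)$, every such $\ze(k+1,\{1\}_l)$ is reducible to zeta values. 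Since the hypotheses $q\geq 2$ ensure $k\geq 1$ in both cases, these two MZV terms are \emph{unconditionally} expressible in terms of zeta values, independent of the parity of the weight.

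It then remains only to treat the linear sum $S_{3,q}$. Because the weight $w=3+q$ is assumed odd, I would invoke the Borwein--Borwein--Girgensohn reduction (\ref{S.pq.odd}) with $p=3$, which expresses $S_{3,q}$ as a polynomial in zeta values. Combining this with the previous step, all three terms on the right-hand side of the reduced identity become polynomials in zeta values, and hence so does the cubic combination $S_{1^3,q}-3S_{12,q}$.

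I do not expect a genuine obstacle, since the statement is essentially a bookkeeping consequence of the machinery already assembled in this section. The single point that requires care is recognizing that the particular values $\ze(q,\{1\}_3)$ and $\ze(q+1,1,1)$ lie in the family $\ze(k+1,\{1\}_l)$, whose reducibility to zeta values is guaranteed by (\ref{ze.k.1})--(\ref{Wkl.sym}); this is what allows them to be discarded cleanly regardless of parity, leaving the oddness of the weight to be used only in reducing the linear sum $S_{3,q}$. In particular, the parity hypothesis is indispensable precisely at that last step, whereas the two depth-three and depth-two MZVs contribute zeta-value terms for every admissible $q$.
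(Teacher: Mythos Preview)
Your proposal is correct and follows exactly the argument implicit in the paper: the paper derives the identity $S_{1^3,q}-3S_{12,q}=-2S_{3,q}+6\ze(q,\{1\}_3)+6\ze(q+1,1,1)$ and then simply remarks that ``this leads us to'' the corollary, relying on the same two ingredients you invoke --- the reducibility of $\ze(k+1,\{1\}_l)$ to zeta values via (\ref{ze.k.1})--(\ref{Wkl.sym}) for the two MZV terms, and the odd-weight reduction (\ref{S.pq.odd}) for the linear sum $S_{3,q}$. Your write-up makes these steps explicit where the paper leaves them tacit, but there is no substantive difference in approach.
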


In fact, a general formula for the sums $S_{r^m,q}$ can be derived from Theorem \ref{Th.ES.eq1}.

\begin{theorem}\label{Th.srmq1}
For integers $r\geq 1$ and $q\geq 2$, the Euler sums $S_{r^m,q}$ satisfy
\begin{equation}\label{ES.rmq1}
S_{r^m,q}=\sum_{\xi\in\mathcal{C}_m}\binom{m}{\xi_1,\xi_2,\ldots,\xi_p}
    \{\ze(q,r\xi_1,r\xi_2,\ldots,r\xi_p)+\ze(q+r\xi_1,r\xi_2,\ldots,r\xi_p)\}\,,
\end{equation}
where $\xi:=(\xi_1,\xi_2,\ldots,\xi_p)$ and
\[
\binom{m}{\xi_1,\xi_2,\ldots,\xi_p}:=\frac{m!}{\xi_1!\xi_2!\cdots\xi_p!}
\]
are the multinomial coefficients.
\end{theorem}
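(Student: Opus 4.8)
The plan is to obtain Theorem \ref{Th.srmq1} as a direct specialization of Theorem \ref{Th.ES.eq1}, taking $i_1=i_2=\cdots=i_m=r$. The entire argument rests on a single structural observation: once all the exponents $i_j$ coincide, the quantities $J_c(\xi,\si)$ cease to depend on the permutation $\si$, and the inner sum over $\mathcal{S}_m$ collapses into a multinomial factor.

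First I would examine the definition
\[
J_c(\xi,\si)=i_{\si(\xi_1+\cdots+\xi_{c-1}+1)}+\cdots+i_{\si(\xi_1+\cdots+\xi_c)}\,.
\]
The summation index here ranges over the $c$-th block of the composition $\xi$, which consists of exactly $\xi_c$ positions. With $i_j=r$ for every $j$, each of these $\xi_c$ summands equals $r$ no matter where $\si$ sends the index, so $J_c(\xi,\si)=r\xi_c$ for all $c=1,\ldots,p$ and all $\si\in\mathcal{S}_m$. This is the only place where any checking is needed, and the verification is immediate once one confirms that the $c$-th block contributes precisely $\xi_c$ copies of $r$.

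Consequently, the inner summand of (\ref{ES.eq1}) becomes
\[
\frac{1}{\xi_1!\cdots\xi_p!}\{\ze(q,r\xi_1,\ldots,r\xi_p)+\ze(q+r\xi_1,r\xi_2,\ldots,r\xi_p)\}\,,
\]
which is constant over $\si$. The sum over $\si\in\mathcal{S}_m$ therefore merely contributes a factor of $|\mathcal{S}_m|=m!$, and combining this with $1/(\xi_1!\cdots\xi_p!)$ produces the multinomial coefficient $\binom{m}{\xi_1,\ldots,\xi_p}$. Summing over $\xi\in\mathcal{C}_m$ then yields (\ref{ES.rmq1}) exactly. Since the whole proof is a clean reduction of the general formula, there is no genuine obstacle; the lone point deserving care is the bookkeeping of the second step, namely establishing the $\si$-independence $J_c=r\xi_c$ that makes the permutation sum trivial.
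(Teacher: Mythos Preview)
Your proposal is correct and matches the paper's own proof essentially line for line: specialize Theorem~\ref{Th.ES.eq1} to $i_1=\cdots=i_m=r$, observe that $J_c(\xi,\si)=r\xi_c$ is independent of $\si$, and collapse the sum over $\mathcal{S}_m$ into the factor $m!$ that produces the multinomial coefficient. There is nothing to add.
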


\begin{proof}
In this case, $i_1=i_2=\cdots=i_m=r$. Thus, given a composition $\xi=(\xi_1,\xi_2,\ldots,\xi_p)\in\mathcal{C}_m$, for any $\si\in\mathcal{S}_m$, there holds $J_c(\xi,\si)=r\xi_c$, for $c=1,2,\ldots,p$, and the corresponding summand equals
\[
\frac{1}{\xi_1!\xi_2!\cdots\xi_p!}
    \{\ze(q,r\xi_1,r\xi_2,\ldots,r\xi_p)+\ze(q+r\xi_1,r\xi_2,\ldots,r\xi_p)\}\,.
\]
Then we obtain the desired result.
\end{proof}

Now, let us establish the explicit formulas for the quadratic sums $S_{r^2,r}$ and the cubic sums $S_{r^3,r}$ from Theorem \ref{Th.srmq1} and identities on MZVs.

By the shuffle relations, the Euler reflection formula (\ref{z.ab}) can be generalized:
\begin{align}
&\ze(a,b,c)+\ze(a,c,b)+\ze(b,a,c)+\ze(b,c,a)+\ze(c,a,b)+\ze(c,b,a)\nonumber\\
&\quad=\ze(a)\ze(b)\ze(c)+2\ze(a+b+c)-\ze(a)\ze(b+c)-\ze(b)\ze(a+c)-\ze(c)\ze(a+b)\,,\label{z.abc}
\end{align}
for $a,b,c\geq2$. Moreover, (\ref{z.ab}) gives
\begin{equation}\label{z.rr}
\ze(r,r)=\frac{1}{2}\{\ze^2(r)-\ze(2r)\}\,,
\end{equation}
which, combined with the recurrence
\[
\ze(\{r\}_m)=\frac{(-1)^{m-1}}{m}\sum_{i=0}^{m-1}(-1)^i\ze(\{r\}_i)\ze(rm-ri)\,,
\]
where $\ze(\{r\}_0):=1$ (see \cite[Eq. (4.6)]{Xu17.MZVES}), yields
\begin{align}
&\ze(\{r\}_3)=\frac{1}{6}\ze^3(r)-\frac{1}{2}\ze(r)\ze(2r)+\frac{1}{3}\ze(3r)\,,\label{z.rrr}\\
&\ze(\{r\}_4)=\frac{1}{24}\ze^4(r)-\frac{1}{4}\ze^2(r)\ze(2r)+\frac{1}{3}\ze(r)\ze(3r)
    +\frac{1}{8}\ze^2(2r)-\frac{1}{4}\ze(4r)\,.\label{z.rrrr}
\end{align}
Using these identities, we obtain the following corollary.

\begin{corollary}\label{Coro.r2r.r3r}
For any integer $r\geq 2$, the quadratic Euler sums $S_{r^2,r}$ and the cubic Euler sums $S_{r^3,r}$ satisfy
\begin{align}
&S_{r^2,r}=\frac{1}{3}\ze^3(r)-\frac{1}{3}\ze(3r)+S_{r,2r}\,,\label{S.rrr}\\
&S_{r^3,r}=\frac{1}{4}\ze^4(r)+\frac{3}{4}\ze^2(2r)+\ze(4r)
    -S_{r,3r}+\frac{3}{2}S_{r^2,2r}-\frac{3}{2}S_{2r,2r}\,.\label{S.rrrr}
\end{align}
Thus, all the sums $S_{r^2,r}$ and $S_{r^3,r}$ are reducible to linear sums and polynomials in zeta values. In particular, for any odd integer $r\geq 3$, the sums $S_{r^2,r}$ are expressible in terms of zeta values.
\end{corollary}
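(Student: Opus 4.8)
The plan is to specialize Theorem~\ref{Th.srmq1} to $m=2$ and $m=3$, both at $q=r$, and then collapse the resulting MZVs onto linear and quadratic Euler sums plus zeta-value polynomials by means of the reflection identities (\ref{z.ab}) and (\ref{z.abc}), the depth-$2$ formula (\ref{S.pq}), the quadratic formula (\ref{S.r1r2q}), and the closed forms (\ref{z.rr})--(\ref{z.rrrr}).

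For $S_{r^2,r}$ the computation is short. Since $\mathcal{C}_2=\{(2),(1,1)\}$, equation (\ref{ES.rmq1}) gives $S_{r^2,r}=\ze(r,2r)+\ze(3r)+2\ze(2r,r)+2\ze(\{r\}_3)$. I would rewrite $\ze(2r,r)=S_{r,2r}-\ze(3r)$ by (\ref{S.pq}), replace $\ze(r,2r)$ through the reflection (\ref{z.ab}) by $\ze(r)\ze(2r)-\ze(3r)-\ze(2r,r)$, and substitute the evaluation (\ref{z.rrr}) of $\ze(\{r\}_3)$. The $\ze(r)\ze(2r)$ contributions cancel against $2\ze(\{r\}_3)$, the linear sum collects to $+S_{r,2r}$, and the $\ze(3r)$ terms combine to $-\frac{1}{3}\ze(3r)$, giving exactly (\ref{S.rrr}).

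For $S_{r^3,r}$, with $\mathcal{C}_3=\{(3),(1,2),(2,1),(1,1,1)\}$, equation (\ref{ES.rmq1}) produces
\begin{align*}
S_{r^3,r}={}&\ze(r,3r)+\ze(4r)+3\{\ze(r,r,2r)+\ze(2r,2r)\}\\
&+3\{\ze(r,2r,r)+\ze(3r,r)\}+6\{\ze(\{r\}_4)+\ze(2r,r,r)\}.
\end{align*}
The essential step is the treatment of the three depth-$3$ MZVs. Splitting $3\ze(r,r,2r)+3\ze(r,2r,r)+6\ze(2r,r,r)=3[\ze(r,r,2r)+\ze(r,2r,r)+\ze(2r,r,r)]+3\ze(2r,r,r)$, I note that since two arguments coincide the bracketed sum is precisely half of the symmetric sum over the six permutations of $(r,r,2r)$; hence (\ref{z.abc}) with $a=b=r$, $c=2r$ evaluates it at once as $\frac{1}{2}\ze^2(r)\ze(2r)+\ze(4r)-\ze(r)\ze(3r)-\frac{1}{2}\ze^2(2r)$. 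After reducing $\ze(r,3r)$ by (\ref{z.ab}), substituting (\ref{z.rrrr}) for $\ze(\{r\}_4)$, and collecting, the products $\ze^2(r)\ze(2r)$ and $\ze(r)\ze(3r)$ cancel, and one is left with
\[
S_{r^3,r}=\frac{1}{4}\ze^4(r)-\frac{3}{4}\ze^2(2r)+\frac{3}{2}\ze(4r)+2\ze(3r,r)+3\ze(2r,2r)+3\ze(2r,r,r).
\]
To reach the stated form I would then recognize, via (\ref{S.pq}) and the quadratic formula (\ref{S.r1r2q}) (namely $S_{r^2,2r}=\ze(2r,2r)+2\ze(2r,r,r)+\ze(4r)+2\ze(3r,r)$), that $2\ze(3r,r)+3\ze(2r,r,r)=\frac{3}{2}S_{r^2,2r}-\frac{3}{2}S_{2r,2r}-S_{r,3r}+\ze(4r)$, the key point being that $\ze(2r,2r)$ cancels in this particular combination; reducing the leftover $3\ze(2r,2r)$ by (\ref{z.rr}) and collecting the zeta-value terms yields (\ref{S.rrrr}).

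The reducibility assertions follow at once: by (\ref{S.rrr}) and (\ref{S.rrrr}) the sums $S_{r^2,r}$ and $S_{r^3,r}$ are expressed through linear sums, the quadratic sum $S_{r^2,2r}$, and zeta values, while $S_{r^2,2r}$ (of even weight $4r$ with $r>1$) is itself reducible to linear sums and zeta values by the Flajolet--Salvy theorem; for odd $r\geq 3$ the linear sum $S_{r,2r}$ in (\ref{S.rrr}) has odd weight $3r$ and hence reduces to zeta values by (\ref{S.pq.odd}), so $S_{r^2,r}$ does as well. The main obstacle is the bookkeeping in the cubic case: the shuffle relation (\ref{z.abc}) disposes of the symmetric part of the depth-$3$ MZVs cleanly, but one must isolate the non-symmetric residue $\ze(2r,r,r)$ and combine it with the depth-$2$ remainders into the precise linear combination $-S_{r,3r}+\frac{3}{2}S_{r^2,2r}-\frac{3}{2}S_{2r,2r}$ appearing in (\ref{S.rrrr}); because $S_{2r,2r}$ is itself reducible to zeta values, this grouping is only one of several equivalent presentations, and care is needed to land on the stated one.
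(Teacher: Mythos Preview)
Your proof is correct and follows essentially the same route as the paper: specialize (\ref{ES.rmq1}) at $m=2,3$ with $q=r$, then collapse the MZVs using (\ref{S.pq}), (\ref{z.ab}), (\ref{z.abc}), and (\ref{z.rr})--(\ref{z.rrrr}), and invoke the Flajolet--Salvy theorem for the reducibility statement. The only tactical difference is that the paper cites identity (\ref{mzv.eq32}) to absorb the depth-$3$ MZVs directly into Euler sums, whereas you instead extract the symmetric part via (\ref{z.abc}) and repackage the residue $\ze(2r,r,r)$ through the quadratic formula (\ref{S.r1r2q}); since (\ref{mzv.eq32}) and (\ref{S.r1r2q}) are essentially inverse to one another, the two computations are equivalent.
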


\begin{proof}
From (\ref{ES.rmq1}), we have
\begin{align*}
S_{r^2,r}&=\ze(r,2r)+\ze(3r)+2\ze(r,r,r)+2\ze(2r,r)\,,\\
S_{r^3,r}&=\ze(r,3r)+\ze(4r)+3\{\ze(r,2r,r)+\ze(3r,r)+\ze(r,r,2r)+\ze(2r,2r)\}\\
    &\quad+6\{\ze(\{r\}_4)+\ze(2r,r,r)\}\,.
\end{align*}
By means of (\ref{S.pq}), (\ref{z.ab}), (\ref{mzv.eq32}) and (\ref{z.abc})--(\ref{z.rrrr}), we obtain (\ref{S.rrr}) and (\ref{S.rrrr}). Next, according to the Flajolet-Salvy theorem \cite[Theorem 4.2]{FlSa98}, $S_{r^2,2r}$ are reducible to linear sums and polynomials in zeta values, so the remark in the corollary also holds. Note that (\ref{S.rrr}) has been presented in \cite[Eq. (3.40)]{XuYS16.ESI}.
\end{proof}

\begin{example}
Using Corollary \ref{Coro.r2r.r3r} as well as the evaluations of some linear Euler sums, we have
\begin{align*}
&S_{4^2,4}=\frac{690247}{16584}\ze(12)-16\ze(3)\ze(9)-28\ze(5)\ze(7)+8S_{2,10}\,,\\
&S_{5^2,5}=\frac{4505}{3}\ze(15)-15\ze(8)\ze(7)-70\ze(6)\ze(9)
    -220\ze(4)\ze(11)-715\ze(2)\ze(13)+\frac{1}{3}\ze^3(5)\,,
\end{align*}
and
\begin{align*}
&\begin{aligned}
S_{2^3,2}=\frac{1285}{32}\ze(8)-60\ze(3)\ze(5)+9\ze(2)\ze^2(3)+\frac{31}{2}S_{2,6}\,,
\end{aligned}\\
&\begin{aligned}
S_{3^3,3}&=\frac{478711}{1382}\ze(12)+\frac{9}{2}\ze(3)\ze(9)-309\ze(5)\ze(7)
    +27\ze(2)\ze^2(5)-63\ze(2)\ze(3)\ze(7)\\
    &\quad+\frac{1}{4}\ze^4(3)-\frac{9}{4}S_{2,10}+\frac{63}{2}\ze(2)S_{2,8}\,.
\end{aligned}
\end{align*}
The readers may try to obtain the evaluations of some other Euler sums by this corollary.\hfill\qedsymbol
\end{example}

By specifying the parameters, we can also obtain many special cases from Theorem \ref{Th.ES.eq2}. For example, setting $m=1,2$ gives
\begin{align}
S_{p,q}&=\ze(p)\ze(q)-\ze(p,q)\,,\label{S.pq2}\\
S_{i_1i_2,q}
    &=\ze(i_1)\ze(i_2)\ze(q)-\ze(i_2)\ze(i_1,q)-\ze(i_1)\ze(i_2,q)+\ze(i_1+i_2,q)\nonumber\\
    &\quad+\ze(i_1,i_2,q)+\ze(i_2,i_1,q)\,,\label{S.r1r2q2}
\end{align}
for $i_1,i_2,p,q\geq 2$. Additionally, we have the next result.

\begin{theorem}\label{Th.srmq2}
For integers $r,q\geq 2$, the Euler sums $S_{r^m,q}$ satisfy
\begin{equation}\label{ES.rmq2}
S_{r^m,q}=\sum_{l=0}^m\sum_{\eta\in\mathcal{C}_l}
    (-1)^l\binom{m}{l}\binom{l}{\eta_1,\eta_2,\ldots,\eta_p}
    \ze^{m-l}(r)\ze(r\eta_1,r\eta_2,\ldots,r\eta_p,q)\,,
\end{equation}
where $\eta:=(\eta_1,\eta_2,\ldots,\eta_p)$.
\end{theorem}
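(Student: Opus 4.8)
The plan is to specialize Theorem \ref{Th.ES.eq2} to the case $i_1=i_2=\cdots=i_m=r$, exactly in the spirit of how Theorem \ref{Th.srmq1} was obtained from Theorem \ref{Th.ES.eq1}. Setting all exponents equal to $r$ in \eqref{ES.eq2}, I would first observe that the quotient of zeta products collapses: $\prod_{k=1}^m\ze(i_k)=\ze^m(r)$ and $\prod_{k=1}^l\ze(i_{j_k})=\ze^l(r)$, so their ratio becomes $\ze^{m-l}(r)$, independent of the chosen indices $j_1<\cdots<j_l$. Consequently the summand no longer depends on the particular $l$-subset, and the inner sum $\sum_{1\leq j_1<\cdots<j_l\leq m}$ contributes only its cardinality $\binom{m}{l}$.

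Next I would simplify the double sum over compositions and permutations. Since every $i_{j_k}$ equals $r$, each block quantity reduces to $\tilde{J}_c(\eta,\tau)=r\eta_c$ for $c=1,\ldots,p$, independent of the permutation $\tau\in\mathcal{S}_l$. Hence the MZV $\ze(\tilde{J}_1(\eta,\tau),\ldots,\tilde{J}_p(\eta,\tau),q)=\ze(r\eta_1,\ldots,r\eta_p,q)$ is the same for every $\tau$, so summing over $\mathcal{S}_l$ merely multiplies by $\lvert\mathcal{S}_l\rvert=l!$. Combining this factor with the denominator $\eta_1!\cdots\eta_p!$ and using $\eta_1+\cdots+\eta_p=l$ produces the multinomial coefficient $\binom{l}{\eta_1,\ldots,\eta_p}$, so the inner double sum becomes $\sum_{\eta\in\mathcal{C}_l}\binom{l}{\eta_1,\ldots,\eta_p}\ze(r\eta_1,\ldots,r\eta_p,q)$.

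Assembling these simplifications, the right-hand side of \eqref{ES.eq2} reduces to
\[
\sum_{l=0}^m(-1)^l\binom{m}{l}\ze^{m-l}(r)
    \sum_{\eta\in\mathcal{C}_l}\binom{l}{\eta_1,\eta_2,\ldots,\eta_p}
    \ze(r\eta_1,r\eta_2,\ldots,r\eta_p,q)\,,
\]
which is precisely \eqref{ES.rmq2}. There is no serious obstacle here; the only points demanding care are verifying that the summand is genuinely independent both of the subset $\{j_1,\ldots,j_l\}$ and of the permutation $\tau$, so that the two combinatorial counts ($\binom{m}{l}$ from the subsets and $l!$ from the permutations) factor out cleanly, and checking the boundary case $l=0$, where the empty composition yields the single term $\ze^m(r)\ze(q)$, consistent with the $l=0$ summand of \eqref{ES.eq2.temp}.
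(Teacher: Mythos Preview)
Your proposal is correct and matches the paper's intended argument: the paper states Theorem~\ref{Th.srmq2} without an explicit proof, placing it immediately after the $m=1,2$ specializations of Theorem~\ref{Th.ES.eq2}, so it is to be derived from Theorem~\ref{Th.ES.eq2} exactly as Theorem~\ref{Th.srmq1} was derived from Theorem~\ref{Th.ES.eq1}. Your handling of the two collapses---the subset sum contributing $\binom{m}{l}$ and the permutation sum contributing $l!$ to form the multinomial coefficient---is precisely the computation required.
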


Note that equating the two expressions of $S_{p,q}$ in (\ref{S.pq}) and (\ref{S.pq2}) gives the Euler reflection formula (\ref{z.ab}). Equating the two expressions of $S_{i_1i_2,q}$ in (\ref{S.r1r2q}) and (\ref{S.r1r2q2}), expanding the two terms $\ze(i_2)\ze(i_1,q)+\ze(i_1)\ze(i_2,q)$ by the shuffle relation, and applying (\ref{z.ab}) yield the generalized formula (\ref{z.abc}).


\section{Alternating Euler sums and expansion of product of sums}\label{Sec.ge.re}

In this section, we establish a general result on product of sums, and show that the two kinds of alternating Euler sums given in (\ref{AES1}) and (\ref{AES2}) can be evaluated by alternating MZVs.

For positive integers $j$ and $k$, let $a_j(k)$ be functions on $k$. Define the partial sums
\[
S_n(a_j):=\sum_{k=1}^na_j(k)
\]
and
\[
S_n(a_1,a_2,\ldots,a_m)=\sum_{n\geq n_1>n_2>\cdots>n_m\geq 1}a_1(n_1)a_2(n_2)\cdots a_m(n_m)\,.
\]
Now, consider the product of sums $\prod_{j=1}^mS_n(a_j)$. By computation, we have
\[
\prod_{j=1}^mS_n(a_j)=\sum_{n\geq n_1,n_2,\ldots,n_m\geq 1}a_1(n_1)a_2(n_2)\cdots a_m(n_m)\,.
\]
Similarly to the proof of (\ref{pro.HarNu.1}), by discussing all the possible weak orderings on $\{n_1,n_2,\ldots,n_m\}$, and using permutations and compositions, we obtain the following general expansion.

\begin{theorem}\label{Th.gen.exp}
For a composition $\theta:=(\theta_1,\theta_2,\ldots,\theta_p)\in\mathcal{C}_m$ and a permutation $\rho\in\mathcal{S}_m$, denote
\[
T_c(\theta,\rho)
    =a_{\rho(\theta_1+\cdots+\theta_{c-1}+1)}\times\cdots\times a_{\rho(\theta_1+\cdots+\theta_c)}\,,
    \quad\text{for } c=1,2,\ldots,p\,,
\]
be products of functions. Then the product of sums $\prod_{j=1}^mS_n(a_j)$ can be expanded as
\begin{equation}
\prod_{j=1}^mS_n(a_j)
    =\sum_{\theta\in\mathcal{C}_m}\sum_{\rho\in\mathcal{S}_m}
    \frac{S_n(T_1(\theta,\rho),T_2(\theta,\rho),\ldots,T_p(\theta,\rho))}
    {\theta_1!\theta_2!\cdots\theta_p!}\,.
\end{equation}
\end{theorem}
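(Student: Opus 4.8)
The plan is to retrace the counting argument already used to derive (\ref{pro.HarNu.1}), now carried out for arbitrary functions $a_j$ rather than for the special choice $a_j(k)=1/k^{i_j}$. First I would expand the product into the unrestricted multiple sum
\[
\prod_{j=1}^mS_n(a_j)=\sum_{n\geq n_1,n_2,\ldots,n_m\geq 1}a_1(n_1)a_2(n_2)\cdots a_m(n_m)\,,
\]
taken over all tuples $(n_1,\ldots,n_m)\in\{1,2,\ldots,n\}^m$ with no ordering constraint. The idea is then to sort these tuples by their \emph{weak ordering type}, grouping the indices $\{1,2,\ldots,m\}$ into blocks according to which of the $n_j$ coincide and listing the blocks in decreasing order of their common value.

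Second, I would parametrize the weak orderings by pairs $(\theta,\rho)$. A composition $\theta=(\theta_1,\ldots,\theta_p)\in\mathcal{C}_m$ records the block sizes read from the largest value down to the smallest, exactly as in (\ref{xi.ordering.1}), while a permutation $\rho\in\mathcal{S}_m$ specifies which index lands in which block, producing the strict chain
\[
n\geq n_{\rho(1)}=\cdots=n_{\rho(\theta_1)}>\cdots>n_{\rho(\theta_1+\cdots+\theta_{p-1}+1)}=\cdots=n_{\rho(m)}\geq 1\,.
\]
For a fixed such chain, with common block values $v_1>v_2>\cdots>v_p$, the summand $a_1(n_1)\cdots a_m(n_m)$ factors as $\prod_{c=1}^pT_c(\theta,\rho)(v_c)$, so that summing over all admissible $v_1>\cdots>v_p$ turns the chain into the strict multiple sum $S_n(T_1(\theta,\rho),\ldots,T_p(\theta,\rho))$.

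Third, and this is the only genuine subtlety, I would account for the overcounting. Two permutations differing only by rearrangements of indices inside the individual blocks yield the same weak ordering, hence the same grouped sum $S_n(T_1,\ldots,T_p)$; each distinct ordering of shape $\theta$ is therefore produced exactly $\theta_1!\theta_2!\cdots\theta_p!$ times as $\rho$ ranges over $\mathcal{S}_m$. Dividing by this factor removes the redundancy, so that summing over $\rho\in\mathcal{S}_m$ and then over all shapes $\theta\in\mathcal{C}_m$ reconstructs every unrestricted tuple exactly once and gives the claimed expansion.

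The main obstacle is purely combinatorial bookkeeping: verifying that the pairs $(\theta,\rho)$ enumerate each weak ordering with the stated multiplicity $\theta_1!\cdots\theta_p!$ and that the within-block products reassemble correctly into the functions $T_c(\theta,\rho)$. Since the argument never uses any property of the $a_j$ beyond their being functions of a single integer variable, it runs identically to the derivation of (\ref{pro.HarNu.1}) for $a_j(k)=1/k^{i_j}$; this generality is precisely what allows the same identity to be applied afterwards to the alternating harmonic numbers in (\ref{AES1}) and (\ref{AES2}).
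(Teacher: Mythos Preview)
Your proposal is correct and follows essentially the same approach as the paper's own proof: expand the product as an unrestricted multiple sum, classify the tuples by their weak ordering type via pairs $(\theta,\rho)\in\mathcal{C}_m\times\mathcal{S}_m$, and divide by $\theta_1!\cdots\theta_p!$ to correct the within-block overcount. Indeed, the paper's proof is even terser than yours, since it simply points back to the derivation of (\ref{pro.HarNu.1}) and notes that the same argument works verbatim for arbitrary functions $a_j$.
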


\begin{proof}
Given a composition $\theta:=(\theta_1,\theta_2,\ldots,\theta_p)\in\mathcal{C}_m$ and a permutation $\rho\in\mathcal{S}_m$, we have the natural ordering
\begin{align*}
n\geq n_{\rho(1)}=\cdots=n_{\rho(\theta_1)}
    &>n_{\rho(\theta_1+1)}=\cdots=n_{\rho(\theta_1+\theta_2)}\\
    &>\cdots>n_{\rho(\theta_1+\cdots+\theta_{p-1}+1)}=\cdots=n_{\rho(m)}\geq 1\,,
\end{align*}
which corresponds to the sum
\begin{align*}
\sum_{n\geq n_1>n_2>\cdots>n_p\geq 1}
&\prod_{k=1}^{\theta_1}a_{\rho(k)}(n_1)
    \prod_{k=1}^{\theta_2}a_{\rho(\theta_1+k)}(n_2)\cdots
    \prod_{k=1}^{\theta_p}a_{\rho(\theta_1+\cdots+\theta_{p-1}+k)}(n_p)\\
&=S_n(T_1(\theta,\rho),T_2(\theta,\rho),\ldots,T_p(\theta,\rho))\,.
\end{align*}
Thus, the desired expansion can be established.
\end{proof}

It can be found that Eq. (\ref{pro.HarNu.1}) is a special case of this theorem. Define $a_j(k):=1/k^{i_j}$, for $k\in\mathbb{N}$; then $S_n(a_j)=H_n^{(i_j)}$ and
\[
\prod_{j=1}^mS_n(a_j)=\prod_{j=1}^mH_n^{(i_j)}\,.
\]
By computation, we have
\[
T_c(\theta,\rho):\quad n_c\longmapsto
    \frac{1}{n_c^{i_{\rho(\theta_1+\cdots+\theta_{c-1}+1)}
    +\cdots+i_{\rho(\theta_1+\cdots+\theta_c)}}}\,,
    \quad\text{for } c=1,2,\ldots,p\text{ and }n_c\in\mathbb{N}\,.
\]
Hence, by the definition of MHS, we obtain (\ref{pro.HarNu.1}).

We can also apply Theorem \ref{Th.gen.exp} to products of harmonic numbers and alternating harmonic numbers, and finally establish the explicit formulas of alternating Euler sums (\ref{AES1}) and (\ref{AES2}).

Let $\chi_{\mathbb{N}}$ be the characteristic function of the set of positive integers:
\[
\chi_{\mathbb{N}}(x):=\left\{
        \begin{array}{cc}
            1\,,&x\in\mathbb{N}\,,\\
            0\,,&x\notin\mathbb{N}\,.
        \end{array}
    \right.
\]
Then we have

\begin{theorem}\label{Th.AES1}
The alternating Euler sums $S_{i_1\cdots i_l\bar{i}_{l+1}\cdots\bar{i}_m,q}$ are expressible in terms of alternating MZVs:
\[
S_{i_1\cdots i_l\bar{i}_{l+1}\cdots\bar{i}_m,q}
    =\sum_{\theta\in\mathcal{C}_m}\sum_{\rho\in\mathcal{S}_m}
        \frac{(-1)^{\la_1+\cdots+\la_p}}{\theta_1!\cdots\theta_p!}
        \bibb{
            \begin{array}{c}
                \ze(q,\tilde{T}_1(\theta,\rho),\ldots,\tilde{T}_p(\theta,\rho))\\
                +\ze(\hat{T}_1(\theta,\rho),\tilde{T}_2(\theta,\rho),\ldots,
                    \tilde{T}_p(\theta,\rho))
            \end{array}
        }\,,
\]
where $\theta:=(\theta_1,\theta_2,\ldots,\theta_p)$,
\begin{equation}\label{AES.lac}
\la_c=\sum_{k=1}^{\theta_c}\chi_{\mathbb{N}}(\rho(\theta_1+\cdots+\theta_{c-1}+k)-l)\,,
\end{equation}
and the parameters in the alternating MZVs are defined by
\begin{align}
&\tilde{T}_c(\theta,\rho)=\left\{
        \begin{array}{ll}
            \overline{i_{\rho(\theta_1+\cdots+\theta_{c-1}+1)}
                +\cdots+i_{\rho(\theta_1+\cdots+\theta_c)}}\,,
                &\text{if }\la_c\text{ is odd}\,,\\
            i_{\rho(\theta_1+\cdots+\theta_{c-1}+1)}+\cdots+i_{\rho(\theta_1+\cdots+\theta_c)}\,,
                &\text{if }\la_c\text{ is even}\,,\label{AES.Tc}
        \end{array}
    \right.\\
&\hat{T}_1(\theta,\rho)=\left\{
        \begin{array}{ll}
            \overline{q+i_{\rho(1)}+\cdots+i_{\rho(\theta_1)}}\,,
                &\text{if }\la_1\text{ is odd}\,,\\
            q+i_{\rho(1)}+\cdots+i_{\rho(\theta_1)}\,,
                &\text{if }\la_1\text{ is even}\,,
        \end{array}
    \right.\nonumber
\end{align}
for $c=1,2,\ldots,p$.
\end{theorem}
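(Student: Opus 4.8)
The plan is to realize the alternating Euler sum as a special instance of the general product expansion in Theorem~\ref{Th.gen.exp}, exactly as Eq.~(\ref{pro.HarNu.1}) was obtained from it. First I would set $a_j(k)=1/k^{i_j}$ for $1\le j\le l$ and $a_j(k)=(-1)^{k-1}/k^{i_j}$ for $l+1\le j\le m$, so that $S_n(a_j)=H_n^{(i_j)}$ in the first case and $S_n(a_j)=\bar H_n^{(i_j)}$ in the second; hence $\prod_{j=1}^mS_n(a_j)$ is precisely the numerator of the alternating Euler sum. Applying Theorem~\ref{Th.gen.exp} then expands this product as a sum over compositions $\theta\in\mathcal C_m$ and permutations $\rho\in\mathcal S_m$ of the nested sums $S_n(T_1(\theta,\rho),\ldots,T_p(\theta,\rho))$, divided by $\theta_1!\cdots\theta_p!$.

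The heart of the argument is to identify each block product $T_c(\theta,\rho)$ as a single signed power. Evaluating $T_c$ at $n_c$ multiplies the $\theta_c$ functions indexed by the block, so the exponent collapses to $i_{\rho(\theta_1+\cdots+\theta_{c-1}+1)}+\cdots+i_{\rho(\theta_1+\cdots+\theta_c)}$, while the sign is $(-1)^{\lambda_c(n_c-1)}$, where $\lambda_c$ counts the alternating indices in that block --- this is precisely the quantity defined in~(\ref{AES.lac}) via $\chi_{\mathbb N}(\rho(\cdots)-l)$. I would then split this sign as $(-1)^{\lambda_c(n_c-1)}=(-1)^{\lambda_c}(-1)^{\lambda_c n_c}$: the constant factors multiply to give the global prefactor $(-1)^{\lambda_1+\cdots+\lambda_p}$, while the $n_c$-dependent factor $(-1)^{\lambda_c n_c}$ is exactly what marks position $c$ as barred when $\lambda_c$ is odd and unbarred when $\lambda_c$ is even. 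Matching this against the alternating-MHS convention $\sigma_c^{n_c}$ (with $\sigma_c=-1$ for a bar) identifies the inner nested sum with the alternating MHS $\ze_n(\tilde T_1(\theta,\rho),\ldots,\tilde T_p(\theta,\rho))$ of~(\ref{AES.Tc}).

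It remains to attach the outer weight $q$. Multiplying by $1/n^q$ and summing over $n\ge1$ turns the truncation variable $n$ of the MHS into a genuine outermost summation index, and I would split the resulting sum according to whether $n>n_1$ or $n=n_1$, exactly as in the proof of Theorem~\ref{Th.ES.eq1}. The case $n>n_1$ contributes the alternating MZV $\ze(q,\tilde T_1,\ldots,\tilde T_p)$, with $q$ in the leading unbarred slot since $1/n^q$ carries no sign; the case $n=n_1$ merges $1/n^q$ with the first block, producing leading exponent $q+i_{\rho(1)}+\cdots+i_{\rho(\theta_1)}$ carrying the sign $(-1)^{\lambda_1 n_1}$, hence the entry $\hat T_1(\theta,\rho)$, barred precisely when $\lambda_1$ is odd. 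Summing the two cases and collecting the prefactor and the factorials yields the stated formula.

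The main obstacle I anticipate is purely the bookkeeping of signs: one must keep the $n_c$-independent contribution $(-1)^{\lambda_c}$ separate from $(-1)^{\lambda_c n_c}$, since conflating them is what typically produces an incorrect overall sign, and one must recognize that it is the factor $(-1)^{n_c-1}$ coming from $\bar H_n^{(r)}$ (rather than the bare $(-1)^{n_c}$ of the MZV convention) that is responsible for the prefactor $(-1)^{\lambda_1+\cdots+\lambda_p}$. Convergence is not an issue, because $q\ge2$ forces the leading exponent of every resulting MZV to exceed $1$, so the interchange of summation used when passing from the MHS to the MZV is justified just as before.
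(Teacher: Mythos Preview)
Your proposal is correct and follows essentially the same route as the paper's proof: define the $a_j$ exactly as you do, invoke Theorem~\ref{Th.gen.exp} to obtain the expansion~(\ref{H.pro.exp}), then multiply by $1/n^q$ and sum over $n$. The paper is terser about the final step, but your explicit sign decomposition $(-1)^{\lambda_c(n_c-1)}=(-1)^{\lambda_c}(-1)^{\lambda_c n_c}$ and the $n>n_1$ versus $n=n_1$ split are precisely the details that underlie its concluding sentence.
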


\begin{proof}
In this case, we should consider the product $\prod_{j=1}^lH_n^{(i_j)}\prod_{j=l+1}^m\bar{H}_n^{(i_j)}$. Since
\[
a_j(k):=\left\{
        \begin{array}{cc}
            1/k^{i_j}\,,&j=1,\ldots,l\,,\\
            (-1)^{k-1}/k^{i_j}\,,&j=l+1,\ldots,m\,,
        \end{array}
    \right.
\]
the functions $T_c(\theta,\rho)=a_{\rho(\theta_1+\cdots+\theta_{c-1}+1)}\times\cdots\times a_{\rho(\theta_1+\cdots+\theta_c)}$ in Theorem \ref{Th.gen.exp} are
\[
T_c(\theta,\rho):\quad n_c\longmapsto\frac{(-1)^{\la_c(n_c-1)}}
    {n_c^{i_{\rho(\theta_1+\cdots+\theta_{c-1}+1)}+\cdots+i_{\rho(\theta_1+\cdots+\theta_c)}}}
    \,,
\]
for $n_c\in\mathbb{N}$ and $c=1,2,\ldots,p$. Therefore, by Theorem \ref{Th.gen.exp}, this product can be expanded as a multiple summation:
\begin{align}
&\prod_{j=1}^lH_n^{(i_j)}\prod_{j=l+1}^m\bar{H}_n^{(i_j)}\nonumber\\
&\quad=\sum_{\theta\in\mathcal{C}_m}\sum_{\rho\in\mathcal{S}_m}
    \frac{1}{\theta_1!\cdots\theta_p!}\sum_{n\geq n_1>\cdots>n_p\geq 1}
    \prod_{c=1}^p\frac{(-1)^{\la_c(n_c-1)}}
    {n_c^{i_{\rho(\theta_1+\cdots+\theta_{c-1}+1)}+\cdots+i_{\rho(\theta_1+\cdots+\theta_c)}}}
    \label{H.pro.exp}\,.
\end{align}
Multiplying the expansion by $1/n^q$ and summing over $n$, we obtain the final result.
\end{proof}

Similarly, using Theorem \ref{Th.gen.exp} to obtain the expansion (\ref{H.pro.exp}), multiplying $(-1)^{n-1}/n^q$ and summing over $n$ yield the explicit formula of $S_{i_1\cdots i_l\bar{i}_{l+1}\cdots\bar{i}_m,\bar{q}}$.

\begin{theorem}\label{Th.AES2}
The alternating Euler sums $S_{i_1\cdots i_l\bar{i}_{l+1}\cdots\bar{i}_m,\bar{q}}$ are expressible in terms of alternating MZVs:
\[
S_{i_1\cdots i_l\bar{i}_{l+1}\cdots\bar{i}_m,\bar{q}}
    =\sum_{\theta\in\mathcal{C}_m}\sum_{\rho\in\mathcal{S}_m}
        \frac{(-1)^{\la_1+\cdots+\la_p+1}}{\theta_1!\cdots\theta_p!}
        \bibb{
            \begin{array}{c}
                \ze(\bar{q},\tilde{T}_1(\theta,\rho),\ldots,\tilde{T}_p(\theta,\rho))\\
                +\ze(\check{T}_1(\theta,\rho),\tilde{T}_2(\theta,\rho),\ldots,
                    \tilde{T}_p(\theta,\rho))
            \end{array}
        }\,,
\]
where $\theta:=(\theta_1,\theta_2,\ldots,\theta_p)$, $\la_c$ and $\tilde{T}_c(\theta,\rho)$ are defined by (\ref{AES.lac}) and (\ref{AES.Tc}), for $c=1,2,\ldots,p$, and the parameter $\check{T}_1(\theta,\rho)$ is defined by
\[
\check{T}_1(\theta,\rho)=\left\{
        \begin{array}{ll}
            \overline{q+i_{\rho(1)}+\cdots+i_{\rho(\theta_1)}}\,,
                &\text{if }\la_1\text{ is even}\,,\\
            q+i_{\rho(1)}+\cdots+i_{\rho(\theta_1)}\,,
                &\text{if }\la_1\text{ is odd}\,.
        \end{array}
    \right.
\]
\end{theorem}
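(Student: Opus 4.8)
The plan is to recycle the product expansion (\ref{H.pro.exp}) derived in the proof of Theorem \ref{Th.AES1}, which already writes $\prod_{j=1}^lH_n^{(i_j)}\prod_{j=l+1}^m\bar{H}_n^{(i_j)}$ as a sum over $\theta\in\mathcal{C}_m$ and $\rho\in\mathcal{S}_m$ of inner sums over $n\geq n_1>\cdots>n_p\geq 1$ carrying the signs $(-1)^{\la_c(n_c-1)}$. The sole structural change from the proof of Theorem \ref{Th.AES1} is that I would multiply this expansion by $(-1)^{n-1}/n^q$ rather than by $1/n^q$ before summing over $n\geq 1$, and then interchange the (absolutely convergent, finite weight) summations. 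Thus the skeleton of the argument is identical; what must be redone is the sign bookkeeping.

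First I would split $(-1)^{\la_c(n_c-1)}=(-1)^{\la_c}(-1)^{\la_c n_c}$ and pull the product $\prod_{c=1}^p(-1)^{\la_c}=(-1)^{\la_1+\cdots+\la_p}$ out front, recognizing the remaining factor $\prod_{c=1}^p(-1)^{\la_c n_c}=\prod_{c=1}^p\si_c^{n_c}$, with $\si_c=(-1)^{\la_c}$, as exactly the alternating multiple harmonic sum $\ze_n(\tilde{T}_1(\theta,\rho),\ldots,\tilde{T}_p(\theta,\rho))$, where the $c$th place is barred when $\la_c$ is odd and unbarred when $\la_c$ is even. This reduction is the same one used in Theorem \ref{Th.AES1}, and it already supplies the prefactor and all the tail exponents $\tilde{T}_c$.

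The key step is then to evaluate $\sum_{n\geq 1}\frac{(-1)^{n-1}}{n^q}\ze_n(\tilde{T}_1,\ldots,\tilde{T}_p)$ by splitting the outer index according to whether $n>n_1$ or $n=n_1$. When $n>n_1$, the chain $n>n_1>\cdots>n_p$ has length $p+1$; writing $(-1)^{n-1}=-(-1)^n$, the top place inherits the sign $(-1)^n$, so this part contributes $-\ze(\bar{q},\tilde{T}_1,\ldots,\tilde{T}_p)$ (admissible since $q\geq 2$). When $n=n_1$, the two top places merge into one index $n_1$ with exponent $q+i_{\rho(1)}+\cdots+i_{\rho(\theta_1)}$ and combined sign $(-1)^{n_1-1}\si_1^{n_1}$. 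The crucial computation is that this equals $-(-1)^{n_1}$ when $\la_1$ is even and $-1$ when $\la_1$ is odd; hence the merged place is barred precisely when $\la_1$ is even, which is exactly the definition of $\check{T}_1(\theta,\rho)$ and is the \emph{opposite} parity convention to $\hat{T}_1$ in Theorem \ref{Th.AES1}. This part contributes $-\ze(\check{T}_1,\tilde{T}_2,\ldots,\tilde{T}_p)$.

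Combining the two parts introduces an overall factor $-1$, which upgrades the prefactor $(-1)^{\la_1+\cdots+\la_p}$ to $(-1)^{\la_1+\cdots+\la_p+1}$ and produces the bracketed sum of the two alternating MZVs; summing over $\theta$ and $\rho$ then gives the claimed formula. The main obstacle is purely the sign bookkeeping: correctly tracking how the external factor $(-1)^{n-1}$ interacts with $\si_1^{n_1}$ in the merged place so as to flip the barring of $\check{T}_1$ relative to $\hat{T}_1$, and accounting for the single extra $-1$ that distinguishes this formula from the one in Theorem \ref{Th.AES1}.
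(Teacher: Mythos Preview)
Your proposal is correct and follows exactly the approach the paper takes: the paper's own proof is the single sentence that one reuses the expansion (\ref{H.pro.exp}), multiplies by $(-1)^{n-1}/n^q$, and sums over $n$. Your write-up simply fills in the sign bookkeeping that the paper leaves implicit, and your computation of how $(-1)^{n-1}\si_1^{n_1}$ flips the barring convention of $\check{T}_1$ relative to $\hat{T}_1$ and produces the extra global factor $-1$ is accurate.
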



\section{Some special cases of Theorems \ref{Th.AES1} and \ref{Th.AES2}}\label{Sec.AES.spca}

The expansion of non-alternating Euler sums in Theorem \ref{Th.ES.eq1} can be obtained directly from Theorem \ref{Th.AES1} by setting $m=l$. Now, let us consider some other cases of Theorems \ref{Th.AES1} and \ref{Th.AES2}.

Similarly to the original notation of alternating MZVs, to succinctly state the following results, we denote the Euler sums (\ref{ES})--(\ref{AES2}) by
\begin{align*}
&S_{i_1\cdots i_m,q}
    :=S(i_1,\ldots,i_m,q;\{1\}_{m+1})\,,\\
&S_{i_1\cdots i_l\bar{i}_{l+1}\cdots \bar{i}_m,q}
    :=S(i_1,\ldots,i_m,q;\{1\}_l,\{-1\}_{m-l},1)\,,\\
&S_{i_1\cdots i_l\bar{i}_{l+1}\cdots \bar{i}_m,\bar{q}}
    :=S(i_1,\ldots,i_m,q;\{1\}_l,\{-1\}_{m-l},-1)\,,
\end{align*}
respectively. In this way, the Euler sums and alternating Euler sums can be represented in a unified manner. For example, all the linear sums can be denoted by $S(p,q;\si_1,\si_2)$, where the parameter $\si_k=\mp1$, depending on whether or not the $k$th parameter before the semicolon is barred. Then using Theorems \ref{Th.AES1} and \ref{Th.AES2} as well as Eq. (\ref{MZV.st}), we obtain the next corollary. See Flajolet and Salvy's results \cite[Theorems 7.1 and 7.2]{FlSa98}. See also \cite{BaiBG94.EM,Sit87}.

\begin{corollary}[the Flajolet--Salvy theorem]
The (alternating) linear Euler sums $S(p,q;\si_1,\si_2)$ satisfy
\begin{equation}\label{AES.pq}
S(p,q;\si_1,\si_2)=\si_1\si_2\{\ze(q,p;\si_2,\si_1)+\ze(p+q;\si_1\si_2)\}\,.
\end{equation}
In particular, when $p+q$ is odd, the sums $S(p,q;\si_1,\si_2)$ are reducible to (alternating) zeta values.
\end{corollary}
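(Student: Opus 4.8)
The plan is to specialize Theorems \ref{Th.AES1} and \ref{Th.AES2} to the linear case $m=1$ and to check that all four sign patterns $(\si_1,\si_2)\in\{\pm1\}^2$ collapse into the single formula (\ref{AES.pq}). For $m=1$ the index sets degenerate to $\mathcal{C}_1=\{(1)\}$ and $\mathcal{S}_1=\{\mathrm{id}\}$, so each double sum reduces to one term with $p=1$, $\theta_1=1$, $\rho=\mathrm{id}$. The quantity (\ref{AES.lac}) then becomes $\la_1=\chi_{\mathbb{N}}(1-l)$, equal to $0$ when $l=1$ (non-alternating harmonic number, $\si_1=+1$) and to $1$ when $l=0$ (alternating harmonic number, $\si_1=-1$). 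The decisive observation to record first is therefore $(-1)^{\la_1}=\si_1$.

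With this identity the sign bookkeeping merges. In Theorem \ref{Th.AES1} (the case $\si_2=+1$) the prefactor $(-1)^{\la_1}$ equals $\si_1=\si_1\si_2$, while in Theorem \ref{Th.AES2} (the case $\si_2=-1$) the prefactor $(-1)^{\la_1+1}$ equals $-\si_1=\si_1\si_2$; so the overall sign is $\si_1\si_2$ in all cases. Next I would track the bars on the MZV arguments. By (\ref{AES.Tc}) the slot carrying $p$ is barred exactly when $\la_1$ is odd, i.e. when $\si_1=-1$, so that slot has sign $\si_1$; the leading slot is $q$ in Theorem \ref{Th.AES1} and $\bar q$ in Theorem \ref{Th.AES2}, i.e. sign $\si_2$ in both. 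Hence the depth-$2$ term is uniformly $\ze(q,p;\si_2,\si_1)$. For the depth-$1$ term, $\hat T_1$ bars $p+q$ iff $\la_1$ is odd (sign $\si_1=\si_1\si_2$), whereas $\check T_1$ reverses the parity and bars $p+q$ iff $\la_1$ is even (sign $-\si_1=\si_1\si_2$); either way the term is $\ze(p+q;\si_1\si_2)$. Assembling prefactor and MZVs gives (\ref{AES.pq}) at once.

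For the reduction when $p+q$ is odd, I would apply the depth-$2$ evaluation (\ref{MZV.st}) with $(s,t)=(q,p)$ and $(\si,\tau)=(\si_2,\si_1)$: since $s+t$ is odd, $\ze(q,p;\si_2,\si_1)$ is expressed through (alternating) zeta values and their products, and the remaining $\ze(p+q;\si_1\si_2)$ is already an (alternating) zeta value. The only delicate point in the whole argument is the sign-and-bar bookkeeping; the shortcut $(-1)^{\la_1}=\si_1$ is what lets the four cases be handled simultaneously rather than one by one, so I do not expect any genuine obstacle.
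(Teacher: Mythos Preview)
Your proposal is correct and follows exactly the approach the paper indicates: specialize Theorems \ref{Th.AES1} and \ref{Th.AES2} to $m=1$ to obtain (\ref{AES.pq}), and then invoke (\ref{MZV.st}) for the odd-weight reduction. The paper merely states that the corollary follows from these ingredients without spelling out the sign-and-bar bookkeeping you carry out, so your write-up is in fact a fuller version of the same proof.
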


According to \cite[Theorem 4.2]{Xu17.MZVES}, for two sums
\[
A_n(m)=\sum_{k=1}^nx_k^m\quad\text{and}\quad
B_n(m)=\sum_{n\geq k_1>k_2>\cdots>k_m\geq 1}x_{k_1}x_{k_2}\cdots x_{k_m}\,,
\]
where $B_n(0)=1$, we have
\begin{equation}\label{Anm.Bnm}
B_n(m)=\frac{(-1)^{m-1}}{m}\sum_{i=0}^{m-1}(-1)^iB_n(i)A_n(m-i)\,.
\end{equation}
Setting $x_k=(-1)^k/k^r$ in (\ref{Anm.Bnm}) and letting $n\to\infty$ give the recurrence of $\ze(\{\bar{r}\}_m)$:
\begin{align}
\ze(\{\bar{r}\}_m)
&=\frac{(-1)^{m-1}}{m}\bibb{\sum_{\substack{i=0\\m-i\text{ odd}}}^{m-1}
        (-1)^i\ze(\{\bar{r}\}_i)\ze(\ol{rm-ri})
    +\sum_{\substack{i=0\\m-i\text{ even}}}^{m-1}
        (-1)^i\ze(\{\bar{r}\}_i)\ze(rm-ri)}\nonumber\\
&=\frac{(-1)^{m-1}}{m}\sum_{i=0}^{m-1}
    (-1)^i\ze(\{\bar{r}\}_i)\ze(rm-ri;(-1)^{m-i})\,,\label{AZrm}
\end{align}
with $\ze(\{\bar{r}_0\}):=1$. For example, we have
\begin{align*}
&\ze(\{\bar{r}\}_2)=-\frac{1}{2}\ze(2r)+\frac{1}{2}\ze^2(\bar{r})\,,\\
&\ze(\{\bar{r}\}_3)=\frac{1}{3}\ze(\ol{3r})-\frac{1}{2}\ze(\bar{r})\ze(2r)+\frac{1}{6}\ze^3(\bar{r})\,,\\
&\ze(\{\bar{r}\}_4)=-\frac{1}{4}\ze(4r)+\frac{1}{3}\ze(\bar{r})\ze(\ol{3r})
    +\frac{1}{8}\ze^2(2r)-\frac{1}{4}\ze(2r)\ze^2(\bar{r})+\frac{1}{24}\ze^4(\bar{r})\,.
\end{align*}
By Theorems \ref{Th.AES1} and \ref{Th.AES2} as well as Eq. (\ref{AZrm}),  we  obtain the expressions of quadratic sums.

\begin{corollary}
The (alternating) quadratic Euler sums $S(i_1,i_2,q;\si_1,\si_2,\si_3)$ satisfy
\begin{align}
&S(i_1,i_2,q;\si_1,\si_2,\si_3)\nonumber\\
&\quad=\si_1\si_2\si_3\bibb{\begin{array}{l}
        \ze(q,i_1+i_2;\si_3,\si_1\si_2)+\ze(q+i_1+i_2;\si_1\si_2\si_3)\\
        \quad+\ze(q,i_1,i_2;\si_3,\si_1,\si_2)+\ze(q+i_1,i_2;\si_1\si_3,\si_2)\\
        \quad+\ze(q,i_2,i_1;\si_3,\si_2,\si_1)+\ze(q+i_2,i_1;\si_2\si_3,\si_1)
    \end{array}}\,.\label{AES.pqr}
\end{align}
Moreover, the alternating sums $S_{\bar{r}^2,\bar{r}}$ are reducible to linear sums and alternating zeta values:
\begin{equation}\label{ASrrr}
S_{\bar{r}^2,\bar{r}}=S_{\bar{r},2r}+\frac{1}{3}\{\ze(\ol{3r})-\ze^3(\bar{r})\}\,.
\end{equation}
In particular, for any odd integer $r$, the sums $S_{\bar{r}^2,\bar{r}}$ are expressible in terms of (alternating) zeta values.
\end{corollary}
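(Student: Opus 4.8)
The plan is to obtain the general quadratic formula (\ref{AES.pqr}) by specializing $m=2$ in Theorems \ref{Th.AES1} and \ref{Th.AES2}, and then to derive (\ref{ASrrr}) as a further specialization combined with the depth-reduction identities already established.

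First, for (\ref{AES.pqr}), I would take $m=2$ in the two theorems and treat the unbarred-$q$ case (Theorem \ref{Th.AES1}, i.e.\ $\si_3=+1$) and the barred-$q$ case (Theorem \ref{Th.AES2}, i.e.\ $\si_3=-1$) simultaneously in the unified notation $S(i_1,i_2,q;\si_1,\si_2,\si_3)$. Since $\mathcal{C}_2=\{(2),(1,1)\}$ and $\mathcal{S}_2=\{\mathrm{id},(1\,2)\}$, the double sum has only four $(\theta,\rho)$-terms. The composition $\theta=(2)$ has $p=1$; both permutations yield the same block $\{i_1,i_2\}$, so together with the factor $1/2!$ they produce the two terms $\ze(q,i_1+i_2;\si_3,\si_1\si_2)$ and $\ze(q+i_1+i_2;\si_1\si_2\si_3)$, while $\theta=(1,1)$ has $p=2$ and gives $\ze(q,i_1,i_2;\si_3,\si_1,\si_2)+\ze(q+i_1,i_2;\si_1\si_3,\si_2)$ from $\mathrm{id}$ and the $i_1\leftrightarrow i_2$ swapped pair from the transposition. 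The crux is the sign bookkeeping: I must verify that the block-$c$ factor $(-1)^{\la_c(n_c-1)}$ factors as $(-1)^{\la_c}\cdot((-1)^{n_c})^{\la_c}$, where the second factor supplies the sign of the entry $\tilde{T}_c$ (barred exactly when $\la_c$ is odd, i.e.\ with sign equal to the product of the $\si_j$ over the indices in block $c$), and the accumulated residual $(-1)^{\la_1+\cdots+\la_p}=(-1)^{\#\{\text{barred among }i_1,i_2\}}=\si_1\si_2$ combines with the $\si_3$ coming from the (un)barred $q$ to give the common prefactor $\si_1\si_2\si_3$ in both theorems.

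Next, to prove (\ref{ASrrr}), I would set $i_1=i_2=q=r$ and $\si_1=\si_2=\si_3=-1$ in (\ref{AES.pqr}); then $\si_1\si_2\si_3=-1$ and $\si_1\si_2=\si_1\si_3=\si_2\si_3=+1$, so the six terms collapse to
\[
S_{\bar{r}^2,\bar{r}}=-\bibb{\ze(\bar{r},2r)+\ze(\ol{3r})+2\ze(\{\bar{r}\}_3)+2\ze(2r,\bar{r})}\,.
\]
I would then reduce the right-hand side using three inputs: the stuffle product $\ze(\bar{r})\ze(2r)=\ze(\bar{r},2r)+\ze(2r,\bar{r})+\ze(\ol{3r})$ (the depth-$2$ alternating analogue of (\ref{z.ab})); the linear formula (\ref{AES.pq}) in the form $S_{\bar{r},2r}=S(r,2r;-1,1)=-\{\ze(2r,\bar{r})+\ze(\ol{3r})\}$, which expresses $\ze(2r,\bar{r})$ through $S_{\bar{r},2r}$; and the evaluation $\ze(\{\bar{r}\}_3)=\tfrac13\ze(\ol{3r})-\tfrac12\ze(\bar{r})\ze(2r)+\tfrac16\ze^3(\bar{r})$ obtained from (\ref{AZrm}). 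Substituting these and collecting the $\ze(\ol{3r})$, $\ze(\bar{r})\ze(2r)$ and $\ze^3(\bar{r})$ contributions should make the mixed product $\ze(\bar{r})\ze(2r)$ cancel and leave precisely $S_{\bar{r},2r}+\tfrac13\{\ze(\ol{3r})-\ze^3(\bar{r})\}$.

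Finally, for the concluding remark, note that $\ze(\ol{3r})$ and $\ze^3(\bar{r})$ are already (alternating) zeta values, so it suffices to reduce $S_{\bar{r},2r}$. This is a linear sum of weight $p+q=3r$, which is odd when $r$ is odd; hence by the Flajolet--Salvy reduction stated right after (\ref{AES.pq}), $S_{\bar{r},2r}$ is expressible in terms of (alternating) zeta values, and so is $S_{\bar{r}^2,\bar{r}}$. I expect the only delicate step to be the sign tracking in the first paragraph, namely matching the internal factors $(-1)^{\la_c(n_c-1)}$ against the barred entries of the alternating MZVs and checking the global prefactor; once (\ref{AES.pqr}) is in hand, the passage to (\ref{ASrrr}) and the odd-$r$ remark are routine substitutions into the already-proved identities.
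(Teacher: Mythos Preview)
Your proposal is correct and follows essentially the same approach as the paper: obtain (\ref{AES.pqr}) by specializing $m=2$ in Theorems \ref{Th.AES1} and \ref{Th.AES2}, then set $i_1=i_2=q=r$, $\si_1=\si_2=\si_3=-1$ and reduce using the alternating reflection formula $\ze(\bar{a},b)+\ze(b,\bar{a})=\ze(\bar{a})\ze(b)-\ze(\ol{a+b})$, the linear expression $S_{\bar{p},q}=-\ze(q,\bar{p})-\ze(\ol{p+q})$, and the evaluation of $\ze(\{\bar{r}\}_3)$ from (\ref{AZrm}). Your explicit sign bookkeeping for the prefactor $\si_1\si_2\si_3$ is more detailed than the paper's proof, which simply asserts (\ref{AES.pqr}) and begins at the derivation of (\ref{ASrrr}), but the substance is identical.
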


\begin{proof}
It suffices to derive (\ref{ASrrr}). From (\ref{AES.pqr}), the alternating Euler sums $S_{\bar{i}_1\bar{i}_2,\bar{q}}$ satisfy
\begin{align*}
S_{\bar{i}_1\bar{i}_2,\bar{q}}&=-\ze(\bar{q},i_1+i_2)-\ze(\ol{q+i_1+i_2})\\
    &\quad-\ze(\bar{q},\bar{i}_1,\bar{i}_2)-\ze(q+i_1,\bar{i}_2)
    -\ze(\bar{q},\bar{i}_2,\bar{i}_1)-\ze(q+i_2,\bar{i}_1)\,.
\end{align*}
Then using the recurrence (\ref{AZrm}), the reflection formula
$\ze(\bar{a},b)+\ze(b,\bar{a})=\ze(\bar{a})\ze(b)-\ze(\ol{a+b})$, and the expression $S_{\bar{p},q}=-\ze(q,\bar{p})-\ze(\ol{p+q})$, we have
\[
S_{\bar{r}^2,\bar{r}}=-\ze(\ol{3r})-\ze(\bar{r},2r)-2\ze(2r,\bar{r})-2\ze(\{\bar{r}\}_3)
    =S_{\bar{r},2r}+\frac{1}{3}\{\ze(\ol{3r})-\ze^3(\bar{r})\}\,.\qedhere
\]
\end{proof}

\begin{example}
Using Eq. (\ref{ASrrr}) and doing some computation by (\ref{AZV}), (\ref{MZV.st}) and (\ref{AES.pq}) yield
\begin{align*}
S_{\bar{1}^2,\bar{1}}&=-\frac{1}{2}\ze(3)+\frac{3}{2}\ze(2)\ln(2)+\frac{1}{3}\ln^3(2)\,,\\
S_{\bar{3}^2,\bar{3}}&=-\frac{7111}{512}\ze(9)+\frac{561}{128}\ze(2)\ze(7)
    +\frac{189}{128}\ze(3)\ze(6)+\frac{315}{64}\ze(4)\ze(5)+\frac{9}{64}\ze^3(3)\,.
\end{align*}
Moreover, combining Eq. (\ref{ASrrr}) with the evaluation of $\ze(4,\bar{2})$ gives
\[
S_{\bar{2}^2,\bar{2}}=-\ze(4,\bar{2})+\frac{53}{64}\ze(6)
    =\frac{905}{96}\ze(6)-\frac{31}{4}\ze(5)\ln(2)
        -\frac{33}{16}\ze^2(3)+4\ze(\bar{5},\bar{1})\,.\hfill\qedsymbol
\]
\end{example}

Similarly to Theorem \ref{Th.srmq1}, we present the explicit expressions of $S_{r^m,\bar{q}}$, $S_{\bar{r}^m,q}$ and $S_{\bar{r}^m,\bar{q}}$.

\begin{theorem}\label{Th.AES.rmq}
The alternating Euler sums $S_{r^m,\bar{q}}$ satisfy
\[
S_{r^m,\bar{q}}=-\sum_{\theta\in\mathcal{C}_m}\binom{m}{\theta_1,\theta_2,\ldots,\theta_p}
    \{\ze(\bar{q},r\theta_1,r\theta_2,\ldots,r\theta_p)
    +\ze(\ol{q+r\theta_1},r\theta_2,\ldots,r\theta_p)\}\,,
\]
and the alternating Euler sums $S_{\bar{r}^m,q}$ and $S_{\bar{r}^m,\bar{q}}$ satisfy
\begin{align*}
S_{\bar{r}^m,q}&=(-1)^m\sum_{\theta\in\mathcal{C}_m}\binom{m}{\theta_1,\theta_2,\ldots,\theta_p}\\
    &\quad\times\bibb{
        \begin{array}{c}
            \ze(q,r\theta_1,\ldots,r\theta_p;
                1,(-1)^{\theta_1},\ldots,(-1)^{\theta_p})\\
            +\ze(q+r\theta_1,r\theta_2,\ldots,r\theta_p;
                (-1)^{\theta_1},(-1)^{\theta_2},\ldots,(-1)^{\theta_p})
        \end{array}
    }\,,\\
S_{\bar{r}^m,\bar{q}}
    &=(-1)^{m+1}\sum_{\theta\in\mathcal{C}_m}\binom{m}{\theta_1,\theta_2,\ldots,\theta_p}\\
    &\quad\times\bibb{
        \begin{array}{c}
            \ze(q,r\theta_1,\ldots,r\theta_p;
                -1,(-1)^{\theta_1},\ldots,(-1)^{\theta_p})\\
            +\ze(q+r\theta_1,r\theta_2,\ldots,r\theta_p;
                (-1)^{\theta_1+1},(-1)^{\theta_2},\ldots,(-1)^{\theta_p})
        \end{array}
    }\,,
\end{align*}
where $\theta:=(\theta_1,\theta_2,\ldots,\theta_p)$.
\end{theorem}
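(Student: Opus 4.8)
The plan is to obtain all three formulas as specializations of Theorems \ref{Th.AES1} and \ref{Th.AES2}, in exact analogy with the way Theorem \ref{Th.srmq1} was deduced from Theorem \ref{Th.ES.eq1}. The three cases correspond to particular choices of the parameter $l$ together with the outer sign: $S_{r^m,\bar{q}}$ is the case $l=m$ (all harmonic numbers unbarred) of Theorem \ref{Th.AES2}; $S_{\bar{r}^m,q}$ is the case $l=0$ (all barred) of Theorem \ref{Th.AES1}; and $S_{\bar{r}^m,\bar{q}}$ is the case $l=0$ of Theorem \ref{Th.AES2}. In every case I set $i_1=\cdots=i_m=r$, and the whole argument reduces to tracking how the general formulas degenerate under this uniform choice.

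First I would evaluate the exponent $\la_c$ from (\ref{AES.lac}). When $l=m$, every argument $\rho(\cdots)$ lies in $\{1,\ldots,m\}=\{1,\ldots,l\}$, so $\rho(\cdots)-l\le 0$, hence $\chi_{\mathbb{N}}(\rho(\cdots)-l)=0$ and therefore $\la_c=0$ for all $c$. When $l=0$, every argument satisfies $\rho(\cdots)-l=\rho(\cdots)\ge 1\in\mathbb{N}$, so each summand in (\ref{AES.lac}) contributes $1$ and $\la_c=\theta_c$. This is the crucial simplification: in both cases $\la_c$ depends only on the block size $\theta_c$, not on which symbols $\rho$ places in that block. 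Feeding these values into (\ref{AES.Tc}) and into the definitions of $\hat{T}_1$ and $\check{T}_1$, and using $i_1=\cdots=i_m=r$, each block of size $\theta_c$ produces $\tilde{T}_c=r\theta_c$, with barred/unbarred status governed by the parity of $\la_c$. When $l=m$ every $\la_c$ is even, so all arguments are unbarred, and the prefactor $(-1)^{\la_1+\cdots+\la_p+1}$ collapses to $-1$. When $l=0$ the parity of $\la_c=\theta_c$ attaches the sign $(-1)^{\theta_c}$ to the $c$th argument, reproducing the sign string $(-1)^{\theta_1},\ldots,(-1)^{\theta_p}$ in the claimed alternating MZVs, while the prefactors $(-1)^{\la_1+\cdots+\la_p}$ and $(-1)^{\la_1+\cdots+\la_p+1}$ become $(-1)^m$ and $(-1)^{m+1}$, respectively.

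The final step is the collapse of the permutation sum. Since $\la_c$, the block sums $r\theta_c$, and all the attached signs depend only on the composition $\theta$ and not on $\rho$, the summand is constant over $\rho\in\mathcal{S}_m$. Summing the $m!$ identical contributions and dividing by $\theta_1!\cdots\theta_p!$ turns the inner permutation sum into the multinomial coefficient $\binom{m}{\theta_1,\ldots,\theta_p}$, leaving a single sum over $\mathcal{C}_m$ of the stated shape.

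I do not anticipate a genuine obstacle; the work is entirely sign bookkeeping in the barred/unbarred notation. The one delicate point is confirming that the parity conventions in (\ref{AES.Tc}) and in the definitions of $\hat{T}_1$ and $\check{T}_1$ reproduce exactly the sign string recorded in the statement—in particular the leading entry $(-1)^{\theta_1+1}$ for $S_{\bar{r}^m,\bar{q}}$. I would settle this by separating the cases $\theta_1$ even and $\theta_1$ odd: for $S_{\bar{r}^m,\bar{q}}$ the argument $q+r\theta_1$ is barred precisely when $\la_1=\theta_1$ is even, which is the condition $(-1)^{\theta_1+1}=-1$, so the two conventions agree. The analogous one-line checks for $\hat{T}_1$ in the $S_{\bar{r}^m,q}$ formula and for $\check{T}_1$ in the $S_{r^m,\bar{q}}$ formula complete the verification.
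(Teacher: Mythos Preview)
Your proposal is correct and follows exactly the approach of the paper's own proof: specialize Theorems \ref{Th.AES1} and \ref{Th.AES2} to $i_1=\cdots=i_m=r$ with $l=m$ or $l=0$, observe that $\la_c=0$ in the first case and $\la_c=\theta_c$ in the other two, and then collapse the permutation sum into a multinomial coefficient. The paper's proof is much terser (it records only the values of $\la_c$ and remarks that the parity of $\theta_c,\theta_1$ governs the barred/unbarred status), but your more explicit sign bookkeeping fills in precisely the details the paper leaves to the reader.
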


\begin{proof}
In the case of $S_{r^m,\bar{q}}$, there holds $\la_c=0$; then $\tilde{T}_c(\theta,\rho)=r\theta_c$ and $\check{T}_1(\theta,\rho)=\ol{q+r\theta_1}$. In the cases of $S_{\bar{r}^m,q}$ and $S_{\bar{r}^m,\bar{q}}$, there holds $\la_c=\theta_c$, and the expressions of $\tilde{T}_c(\theta,\rho)$, $\hat{T}_1(\theta,\rho)$ and $\check{T}_1(\theta,\rho)$ depend on the parity of $\theta_c$ and $\theta_1$.
\end{proof}


\section{Maple package for (alternating) Euler sums}\label{Sec.Maple}

We have developed the Maple package to evaluate the (alternating) Euler sums
\[
S_{i_1i_2\cdots i_m,q}\,,\quad
S_{i_1\cdots i_l\bar{i}_{l+1}\cdots \bar{i}_m,q}\,,\quad
S_{i_1\cdots i_l\bar{i}_{l+1}\cdots \bar{i}_m,\bar{q}}
\]
by the theory of this paper. The readers can download our Maple package from \cite{XW2017}.

Using the function
\[
\texttt{EulerSum([$i_1,\ldots,i_m,q$])}
\]
related to Theorem \ref{Th.ES.eq1}, we obtain directly the explicit formula of $S_{i_1i_2\cdots i_m,q}$ in terms of MZVs. By further running the program on MZVs provided by Petitot \cite{Peti09}, we can transform the result and obtain the explicit formula in terms of $\ze(2)$, $\ze(6,2)$, $\ze(8,2)$, $\ze(10,2)$, $\ze(8,2,1)$, $\ze(8,2,1,1)$, etc., and $\ze(s)$, for odd integer $s\geq 3$. For the Euler sums of small weights, more transformations can be applied to the explicit formulas to obtain the familiar forms. Let us take the sum $S_{1^3,9}$ for an instance. By the function, Maple gives
\begin{align*}
S_{1^3,9}&=\ze(9,3)+3\ze(9,1,2)+3\ze(9,2,1)+6\ze(9,1,1,1)\\
    &\quad+\ze(12)+3\ze(10,2)+3\ze(11,1)+6\ze(10,1,1)\\
&=\frac{1060345}{22112}\ze(12)-35\ze(3)\ze(9)-33\ze(5)\ze(7)+3\ze(2)\ze(3)\ze(7)
    +\frac{3}{2}\ze(2)\ze^2(5)\\
    &\quad+\frac{21}{4}\ze(6)\ze^2(3)+\frac{15}{2}\ze(3)\ze(4)\ze(5)
    -\frac{1}{4}\ze^4(3)+\frac{15}{4}S_{2,10}\,.
\end{align*}

It can be found that the number of non-alternating Euler sums of weight $w$ is $\sum_{k=1}^{w-2}p(k)$, where $p(k)$ is the number of partitions of the positive integer $k$. Thus, the numbers of non-alternating Euler sums of weights $3,4,5,6,7,8,9,10,11$ are $1,3,6,11,18,29,44,66,96$, respectively. The evaluations of these Euler sums were presented in \cite{FlSa98,SunPing03,SunPing07,WangLyu18.ESSS,WangXu17.ESW,
Xu17.MZVES,XuCh16.7Sum,XuLi17.TTS,XuYS16.ESI}. Using the package developed in this paper, we have checked all the non-alternating Euler sums of weight $w\leq 11$ directly.

Just as the non-alternating case, using the function
\[
\texttt{AEulerSum([$i_1,\ldots,i_m,q$])}
\]
related to Theorems \ref{Th.AES1} and \ref{Th.AES2}, we can express the alternating Euler sums in terms of alternating MZVs. Note that in this function, we use $-r$ instead of the parameter $\bar{r}$. Additionally, by substituting the evaluations of the alternating MZVs given in \cite{BBV2010}, the alternating Euler sums can be further reduced to zeta values, polylogarithms, $\ln(2)$, etc. For example, to compute $S_{1^2,\bar{3}}$, we should input $\texttt{AEulerSum([$1,1,-3$])}$, and Maple gives
\begin{align*}
S_{1^2,\bar{3}}&=-\ze(\bar{5})-2\ze(\bar{4},1)-\ze(\bar{3},2)-2\ze(\bar{3},1,1)\\
&=4\Lii_5\smbb{\frac{1}{2}}-\frac{19}{32}\ze(5)
    +4\Lii_4\smbb{\frac{1}{2}}\ln(2)-\frac{11}{8}\ze(3)\ze(2)
    +\frac{7}{4}\ze(3)\ln^2(2)\\
&\quad-\frac{2}{3}\ze(2)\ln^3(2)+\frac{2}{15}\ln^5(2)\,.
\end{align*}
Similarly, we have
\begin{align*}
S_{\bar{1}^2,3}&=4\Lii_5\smbb{\frac{1}{2}}-\frac{167}{32}\ze(5)+\frac{19}{8}\ze(4)\ln(2)
    +\frac{3}{4}\ze(3)\ze(2)+\frac{7}{4}\ze(3)\ln^2(2)\\
    &\quad+\frac{1}{3}\ze(2)\ln^3(2)-\frac{1}{30}\ln^5(2)\,,\\
S_{\bar{1}^2,\bar{3}}&=-\frac{19}{32}\ze(5)-4\Lii_4\smbb{\frac{1}{2}}\ln(2)
    +\frac{19}{8}\ze(4)\ln(2)+\frac{3}{8}\ze(3)\ze(2)
    +\ze(2)\ln^3(2)-\frac{1}{6}\ln^5(2)\,,\\
S_{1\bar{1},3}&=2\Lii_5\smbb{\frac{1}{2}}-\frac{193}{64}\ze(5)
    +4\ze(4)\ln(2)+\frac{3}{8}\ze(3)\ze(2)-\frac{7}{8}\ze(3)\ln^2(2)\\
    &\quad+\frac{1}{6}\ze(2)\ln^3(2)-\frac{1}{60}\ln^5(2)\,,\\
S_{1\bar{1},\bar{3}}&=2\Lii_5\smbb{\frac{1}{2}}-\frac{37}{16}\ze(5)
    +4\ze(4)\ln(2)-\frac{1}{8}\ze(3)\ze(2)-\frac{7}{8}\ze(3)\ln^2(2)\\
    &\quad+\frac{1}{6}\ze(2)\ln^3(2)-\frac{1}{60}\ln^5(2)\,.
\end{align*}
Using this function, we have computed all the alternating Euler sums of weight $w\leq 6$. The evaluations of some alternating Euler sums of weight $w\leq 5$ can be found in \cite{Xu18.EEHS,Xu17.SECES,XuCai18.OHNN,XuCh16.FACM,XuYZ17.EEQES}, so we list in Table \ref{Tab.Eval.AE.6}, as examples, the evaluations of $18$ alternating Euler sums of weight $6$, which are of the form
\[
S_{\pi_1\pi_2\cdots\pi_k,\bar{q}}
    =\sum_{n=1}^{\infty}(-1)^{n-1}\frac{H_n^{(\pi_1)}H_n^{(\pi_2)}\cdots H_n^{(\pi_k)}}{n^q}\,.
\]
Note that the values in Table \ref{Tab.Eval.AE.6} represent the coefficients. For instance, from this table, we have
\begin{align*}
S_{1^5,\bar{1}}&=-\frac{91}{16}\ze(6)
    -20\Lii_5\smbb{\frac{1}{2}}\ln(2)+\frac{565}{32}\ze(5)\ln(2)
    +5\Lii_4\smbb{\frac{1}{2}}\ze(2)\\
&\quad-10\Lii_4\smbb{\frac{1}{2}}\ln^2(2)-10\ze(4)\ln^2(2)
    -\frac{79}{32}\ze^2(3)+\frac{45}{8}\ze(3)\ze(2)\ln(2)\\
&\quad-\frac{15}{4}\ze(3)\ln^3(2)+\frac{55}{24}\ze(2)\ln^4(2)-\frac{5}{12}\ln^6(2)\,.
\end{align*}
Moreover, in this table, we use $\ze(\bar{5},1)$ to replace $\ze(\bar{5},\bar{1})$, because when $\ze(\bar{5},1)$ is used, more coefficients are vanish.

\renewcommand\arraystretch{1.5}

\begin{sidewaystable}[htbp]
{\scriptsize\centering
\begin{tabular}{|c|c|c|c|c|c|c|c|c|c|c|c|c|c|c|}
\hline
\!\!No.\!\!
    &\!\!Sums\!\!
    &\!\!$\Li_6(\frac{1}{2})$\!\!
    &\!\!$\ze(6)$\!\!
    &\!\!$\Li_5(\frac{1}{2})\ln(2)$\!\!
    &\!\!$\ze(5)\ln(2)$\!\!
    &\!\!$\Li_4(\frac{1}{2})\ze(2)$\!\!
    &\!\!$\Li_4(\frac{1}{2})\ln^2(2)$\!\!
    &\!\!$\ze(4)\ln^2(2)$\!\!
    &\!\!$\ze^2(3)$\!\!
    &\!\!$\ze(3)\ze(2)\ln(2)$\!\!
    &\!\!$\ze(3)\ln^3(2)$\!\!
    &\!\!$\ze(2)\ln^4(2)$\!\!
    &\!\!$\ln^6(2)$\!\!
    &\!\!$\ze(\bar{5},1)$\!\!
    \\\hline
1&$S_{1^5,\bar{1}}$
    &\!\!$0$\!\!
    &\!\!$-\frac{91}{16}$\!\!
    &\!\!$-20$\!\!
    &\!\!$\frac{565}{32}$\!\!
    &\!\!$5$\!\!
    &\!\!$-10$\!\!
    &\!\!$-10$\!\!
    &\!\!$-\frac{79}{32}$\!\!
    &\!\!$\frac{45}{8}$\!\!
    &\!\!$-\frac{15}{4}$\!\!
    &\!\!$\frac{55}{24}$\!\!
    &\!\!$-\frac{5}{12}$\!\!
    &\!\!$0$\!\!
    \\\hline
2&$S_{1^32,\bar{1}}$
    &\!\!$12$\!\!
    &\!\!$-\frac{2411}{192}$\!\!
    &\!\!$6$\!\!
    &\!\!$\frac{155}{32}$\!\!
    &\!\!$-1$\!\!
    &\!\!$3$\!\!
    &\!\!$-\frac{11}{16}$\!\!
    &\!\!$\frac{15}{8}$\!\!
    &\!\!$-\frac{3}{4}$\!\!
    &\!\!$\frac{7}{8}$\!\!
    &\!\!$-\frac{5}{12}$\!\!
    &\!\!$\frac{11}{120}$\!\!
    &\!\!$-5$\!\!
    \\\hline
3&$S_{1^23,\bar{1}}$
    &\!\!$0$\!\!
    &\!\!$\frac{91}{32}$\!\!
    &\!\!$4$\!\!
    &\!\!$-\frac{31}{4}$\!\!
    &\!\!$-1$\!\!
    &\!\!$2$\!\!
    &\!\!$\frac{59}{16}$\!\!
    &\!\!$-\frac{11}{64}$\!\!
    &\!\!$\frac{3}{8}$\!\!
    &\!\!$-\frac{1}{4}$\!\!
    &\!\!$-\frac{5}{24}$\!\!
    &\!\!$\frac{1}{20}$\!\!
    &\!\!$0$\!\!
    \\\hline
4&$S_{12^2,\bar{1}}$
    &\!\!$0$\!\!
    &\!\!$-\frac{35}{16}$\!\!
    &\!\!$-8$\!\!
    &\!\!$\frac{341}{32}$\!\!
    &\!\!$1$\!\!
    &\!\!$-4$\!\!
    &\!\!$-\frac{17}{4}$\!\!
    &\!\!$\frac{27}{32}$\!\!
    &\!\!$-\frac{7}{8}$\!\!
    &\!\!$0$\!\!
    &\!\!$\frac{3}{8}$\!\!
    &\!\!$-\frac{1}{10}$\!\!
    &\!\!$0$\!\!
    \\\hline
5&$S_{14,\bar{1}}$
    &\!\!$0$\!\!
    &\!\!$\frac{851}{192}$\!\!
    &\!\!$0$\!\!
    &\!\!$-\frac{93}{32}$\!\!
    &\!\!$-1$\!\!
    &\!\!$0$\!\!
    &\!\!$\frac{17}{16}$\!\!
    &\!\!$-\frac{9}{8}$\!\!
    &\!\!$0$\!\!
    &\!\!$0$\!\!
    &\!\!$-\frac{1}{24}$\!\!
    &\!\!$0$\!\!
    &\!\!$3$\!\!
    \\\hline
6&$S_{23,\bar{1}}$
    &\!\!$0$\!\!
    &\!\!$-\frac{647}{192}$\!\!
    &\!\!$0$\!\!
    &\!\!$\frac{31}{16}$\!\!
    &\!\!$2$\!\!
    &\!\!$0$\!\!
    &\!\!$-\frac{5}{4}$\!\!
    &\!\!$\frac{3}{4}$\!\!
    &\!\!$\frac{3}{8}$\!\!
    &\!\!$0$\!\!
    &\!\!$\frac{1}{12}$\!\!
    &\!\!$0$\!\!
    &\!\!$-2$\!\!
    \\\hline
7&$S_{5,\bar{1}}$
    &\!\!$0$\!\!
    &\!\!$\frac{111}{64}$\!\!
    &\!\!$0$\!\!
    &\!\!$-\frac{15}{16}$\!\!
    &\!\!$0$\!\!
    &\!\!$0$\!\!
    &\!\!$0$\!\!
    &\!\!$-\frac{9}{32}$\!\!
    &\!\!$0$\!\!
    &\!\!$0$\!\!
    &\!\!$0$\!\!
    &\!\!$0$\!\!
    &\!\!$0$\!\!
    \\\hline
8&$S_{1^4,\bar{2}}$
    &\!\!$-24$\!\!
    &\!\!$\frac{2159}{96}$\!\!
    &\!\!$-24$\!\!
    &\!\!$0$\!\!
    &\!\!$6$\!\!
    &\!\!$-12$\!\!
    &\!\!$-\frac{15}{4}$\!\!
    &\!\!$-\frac{195}{32}$\!\!
    &\!\!$\frac{21}{4}$\!\!
    &\!\!$-\frac{7}{2}$\!\!
    &\!\!$\frac{7}{4}$\!\!
    &\!\!$-\frac{1}{3}$\!\!
    &\!\!$10$\!\!
    \\\hline
9&$S_{1^22,\bar{2}}$
    &\!\!$8$\!\!
    &\!\!$-\frac{1409}{192}$\!\!
    &\!\!$8$\!\!
    &\!\!$0$\!\!
    &\!\!$-1$\!\!
    &\!\!$4$\!\!
    &\!\!$\frac{5}{8}$\!\!
    &\!\!$\frac{77}{64}$\!\!
    &\!\!$-\frac{7}{8}$\!\!
    &\!\!$\frac{7}{6}$\!\!
    &\!\!$-\frac{13}{24}$\!\!
    &\!\!$\frac{1}{9}$\!\!
    &\!\!$-3$\!\!
    \\\hline
10&$S_{13,\bar{2}}$
    &\!\!$0$\!\!
    &\!\!$-\frac{331}{384}$\!\!
    &\!\!$0$\!\!
    &\!\!$0$\!\!
    &\!\!$0$\!\!
    &\!\!$0$\!\!
    &\!\!$0$\!\!
    &\!\!$\frac{75}{64}$\!\!
    &\!\!$0$\!\!
    &\!\!$0$\!\!
    &\!\!$0$\!\!
    &\!\!$0$\!\!
    &\!\!$-\frac{7}{2}$\!\!
    \\\hline
11&$S_{2^2,\bar{2}}$
    &\!\!$0$\!\!
    &\!\!$\frac{521}{96}$\!\!
    &\!\!$0$\!\!
    &\!\!$0$\!\!
    &\!\!$-4$\!\!
    &\!\!$0$\!\!
    &\!\!$\frac{5}{2}$\!\!
    &\!\!$\frac{23}{16}$\!\!
    &\!\!$-\frac{7}{2}$\!\!
    &\!\!$0$\!\!
    &\!\!$-\frac{1}{6}$\!\!
    &\!\!$0$\!\!
    &\!\!$4$\!\!
    \\\hline
12&$S_{4,\bar{2}}$
    &\!\!$0$\!\!
    &\!\!$\frac{389}{192}$\!\!
    &\!\!$0$\!\!
    &\!\!$0$\!\!
    &\!\!$0$\!\!
    &\!\!$0$\!\!
    &\!\!$0$\!\!
    &\!\!$-\frac{15}{16}$\!\!
    &\!\!$0$\!\!
    &\!\!$0$\!\!
    &\!\!$0$\!\!
    &\!\!$0$\!\!
    &\!\!$4$\!\!
    \\\hline
13&$S_{1^3,\bar{3}}$
    &\!\!$-12$\!\!
    &\!\!$\frac{771}{64}$\!\!
    &\!\!$-12$\!\!
    &\!\!$0$\!\!
    &\!\!$3$\!\!
    &\!\!$-6$\!\!
    &\!\!$-\frac{15}{8}$\!\!
    &\!\!$-\frac{207}{64}$\!\!
    &\!\!$\frac{21}{8}$\!\!
    &\!\!$-\frac{7}{4}$\!\!
    &\!\!$\frac{7}{8}$\!\!
    &\!\!$-\frac{1}{6}$\!\!
    &\!\!$\frac{9}{2}$\!\!
    \\\hline
14&$S_{12,\bar{3}}$
    &\!\!$0$\!\!
    &\!\!$\frac{29}{192}$\!\!
    &\!\!$0$\!\!
    &\!\!$0$\!\!
    &\!\!$1$\!\!
    &\!\!$0$\!\!
    &\!\!$-\frac{5}{8}$\!\!
    &\!\!$-\frac{49}{64}$\!\!
    &\!\!$\frac{7}{8}$\!\!
    &\!\!$0$\!\!
    &\!\!$\frac{1}{24}$\!\!
    &\!\!$0$\!\!
    &\!\!$\frac{3}{2}$\!\!
    \\\hline
15&$S_{3,\bar{3}}$
    &\!\!$0$\!\!
    &\!\!$-\frac{113}{64}$\!\!
    &\!\!$0$\!\!
    &\!\!$0$\!\!
    &\!\!$0$\!\!
    &\!\!$0$\!\!
    &\!\!$0$\!\!
    &\!\!$\frac{63}{32}$\!\!
    &\!\!$0$\!\!
    &\!\!$0$\!\!
    &\!\!$0$\!\!
    &\!\!$0$\!\!
    &\!\!$-6$\!\!
    \\\hline
16&$S_{1^2,\bar{4}}$
    &\!\!$0$\!\!
    &\!\!$\frac{241}{192}$\!\!
    &\!\!$0$\!\!
    &\!\!$0$\!\!
    &\!\!$0$\!\!
    &\!\!$0$\!\!
    &\!\!$0$\!\!
    &\!\!$-\frac{1}{4}$\!\!
    &\!\!$0$\!\!
    &\!\!$0$\!\!
    &\!\!$0$\!\!
    &\!\!$0$\!\!
    &\!\!$-1$\!\!
    \\\hline
17&$S_{2,\bar{4}}$
    &\!\!$0$\!\!
    &\!\!$\frac{361}{192}$\!\!
    &\!\!$0$\!\!
    &\!\!$0$\!\!
    &\!\!$0$\!\!
    &\!\!$0$\!\!
    &\!\!$0$\!\!
    &\!\!$-\frac{3}{4}$\!\!
    &\!\!$0$\!\!
    &\!\!$0$\!\!
    &\!\!$0$\!\!
    &\!\!$0$\!\!
    &\!\!$4$\!\!
    \\\hline
18&$S_{1,\bar{5}}$
    &\!\!$0$\!\!
    &\!\!$\frac{31}{32}$\!\!
    &\!\!$0$\!\!
    &\!\!$0$\!\!
    &\!\!$0$\!\!
    &\!\!$0$\!\!
    &\!\!$0$\!\!
    &\!\!$0$\!\!
    &\!\!$0$\!\!
    &\!\!$0$\!\!
    &\!\!$0$\!\!
    &\!\!$0$\!\!
    &\!\!$-1$\!\!
    \\\hline
\end{tabular}
\caption{The evaluations of $18$ alternating Euler sums of weight $6$}\label{Tab.Eval.AE.6}
}
\end{sidewaystable}

The method used in the present paper reflects the quasi shuffle algebra of nested sums \cite{MoUW02.NSE,Ver99}: one could use the quasi shuffle algebra to find a linear representation of the numerator of the summand, and then split the sums. This method is implemented in symbolic computation packages like \texttt{XSummer} \cite{MoU06.XTF,MoUW02.NSE} and \texttt{HarmonicSums} \cite{Abling13,Abling14}, and is used in certain calculations in quantum chromodynamics. However, in this paper, we give the explicit formulas to expand (alternating) Euler sums in terms of (alternating) MZVs, and develop the Maple package to evaluate these sums according to the explicit formulas. To the best of our knowledge, in the literature, there is no such general explicit formula satisfied by all the (alternating) Euler sums.


\section*{Acknowledgments}

The authors would like to thank the anonymous referees for their valuable
comments and suggestions, and Professor Petitot for his kind help. The second author is supported by the National Natural Science Foundation of China (under Grant 11671360), and the Zhejiang Provincial Natural Science Foundation of China (under Grant LQ17A010010).


\end{document}